\documentclass[12pt, leqno, english]{article}

\usepackage{babel,latexsym,graphicx,amsmath,amsfonts,amsthm}

\def\sign{\mathop{\mathrm{sign}}}
\def\wtilde{\widetilde}
\def\what{\widehat}

\textwidth 15cm
\textheight 23cm
\topmargin -1.5cm
\oddsidemargin .7cm
\evensidemargin .7cm

\newtheoremstyle{localthm}
	{10pt} 
	{5pt} 
	{\sl} 
	{} 
	{\bf} 
	{{\rm.}} 
	{.7em} 
	{} 

\theoremstyle{localthm}
\newtheorem{Theorem}{Theorem}

\newtheorem{Lemma}{Lemma}

\newtheoremstyle{localrem}
	{10pt} 
	{5pt} 
	{\rm} 
	{} 
	{\bf} 
	{{\rm.}} 
	{.7em} 
	{} 

\theoremstyle{localrem}
\newtheorem{Remark}{Remark}

\parskip0.5\baselineskip
\leftmargini0.5cm

\begin{document}
\addtolength{\baselineskip}{0.15\baselineskip}

\begin{center}
	{\large University of Bern}\\
	{\large Institute of Mathematical Statistics and Actuarial Science}
	
	\bigskip
	\textbf{\large Technical Report 76}
\end{center}

\bigskip
\centerline{\Large\bf Bounding Standard Gaussian Tail Probabilities}

\medskip
\centerline{\large Lutz D\"umbgen}

\smallskip
\centerline{December 2010}

\bigskip

\begin{abstract}
We review various inequalities for Mills' ratio $(1 - \Phi)/\phi$, where $\phi$ and $\Phi$ denote the standard Gaussian density and distribution function, respectively. Elementary considerations involving finite continued fractions lead to a general approximation scheme which implies and refines several known bounds.
\end{abstract}

\section{Introduction}

Explicit formulae for the distribution function $\Phi$ of the standard Gaussian distribution are unknown, apart from various expansions, e.g.\ the series expansion
$$
	\Phi(x) \ = \ \frac{1}{2}
	+ \frac{1}{\sqrt{2\pi}} \sum_{k=0}^\infty \frac{(-1)^k x^{2k+1}}{2^k k! (2k+1)}
$$
for real $x$, or the continued fractions expansion
\begin{equation}
\label{continued fractions}
	1 - \Phi(x) \ = \ \frac{\phi(x)}
		{x + \displaystyle\frac{1}{x + 
				\displaystyle\frac{2}{x +
					\displaystyle\frac{3}{\ddots}}}}
\end{equation}
for $x > 0$. Here $\phi = \Phi'$ denotes the standard Gaussian density. We refer to Abramowitz and Stegun (1972, Chapter 7) for these and numerous further results about the function $\Phi$. The expansion \eqref{continued fractions} indicates that there should be good approximations or bounds on $1 - \Phi(x)$ of the form
$$
	1 - \Phi(x) \ \approx \ \frac{\phi(x)}{h(x)} ,
	\quad x > 0 ,
$$
with $h : (0,\infty) \to (0,\infty)$ being a relatively simple function. For instance, let $h_0(x) := x$, $h_1(x) := x + 1/x$ and, for $k \ge 2$,
$$
	h_k(x) \ := \ x + \frac{1}{x + 
				\displaystyle\frac{2}{\begin{array}{c}\ddots\\\strut\end{array} x +
					\displaystyle\frac{k}{x}}} .
$$
Then it is known that for integers $k \ge 0$ and arbitrary $x > 0$,
\begin{equation}
\label{finite continued fractions}
	1 - \Phi(x) \ \left\{\!\!\begin{array}{c}<\\>\end{array}\!\!\right\}
	\ \frac{\phi(x)}{h_k(x)}
	\quad\text{if $k$ is} \ \begin{cases} \text{even} , \\ \text{odd} . \end{cases}
\end{equation}
In particular,
$$
	\frac{\phi(x)}{x + 1/x} \ < \ 1 - \Phi(x) \ < \ \frac{\phi(x)}{x} ,
$$
which was first established by Gordon (1941). These special functions $h_k$ and their properties have been investigated by numerous authors; we only refer to Shenton (1954), Pinelis (2002), Baricz (2008) and the references therein.

While the previous bounds are only useful for $x$ bounded away from zero, some authors provided inequalities on the whole interval $[0,\infty)$ or even larger sets. Indeed, Komatu (1955) showed that
\begin{equation}
\label{Komatu}
	\frac{2 \phi(x)}{\sqrt{4 + x^2} + x}
	\ < \ 1 - \Phi(x) \ < \ \frac{2 \phi(x)}{\sqrt{2 + x^2} + x}
	\quad\text{for} \ x \ge 0 ;
\end{equation}
see also Ito and McKean (1974). In fact, the lower bound is due to Birnbaum (1942) who formulated it equivalently as
$$
	\frac{\sqrt{4 + x^2} - x}{2} \, \phi(x) \ < \ 1 - \Phi(x)
	\quad\text{for} \ x \ge 0 .
$$
Pollak (1956) refined Komatu's upper bound as follows:
\begin{equation}
\label{Pollak}
	1 - \Phi(x) \ < \ \frac{2 \phi(x)}{\sqrt{8/\pi + x^2} + x}
	\quad\text{for} \ x > 0 .
\end{equation}
An alternative upper bound, due to Sampford (1953) and rediscovered by Szarek and Werner (1999), reads
\begin{equation}
\label{Sampford}
	1 - \Phi(x) \ < \ \frac{4 \phi(x)}{\sqrt{8 + x^2} + 3 x}
	\quad\text{for} \ x \ge 0 .
\end{equation}
Shenton (1954) and Kouba (2006) generalized Komatu's bound \eqref{Komatu} and Sampford's bound \eqref{Sampford} substantially. Here is a reformulation of their bounds with continued fractions: For integers $k \ge 0$ and $j = 1,2$ define $h_{k,j} : [0,\infty) \to (0,\infty)$ via
$$
	h_{0,j}(x) \ := \ \sqrt{j/2 + (x/2)^2} + x/2 ,
	\quad
	h_{1,j}(x) \ := \ x + \frac{1}{\sqrt{1 + j/2 + (x/2)^2} + x/2} ,
$$
and, if $k \ge 2$,
$$
	h_{k,j}(x) \ = \ x + \frac{1}{x + 
				\displaystyle\frac{2}{\begin{array}{c}\ddots\\\strut\end{array} x +
					\displaystyle\frac{k}{\sqrt{k + j/2 + (x/2)^2} + x/2}}} .
$$
Then for all $x \ge 0$ and integers $m \ge 0$,
\begin{eqnarray}
\label{Shenton 1}
	\frac{\phi(x)}{h_{2m,2}(x)}
		& < & 1 - \Phi(x)
			\ < \frac{\phi(x)}{h_{2m,1}(x)} , \\
\label{Shenton 2}
	\frac{\phi(x)}{h_{2m+1,1}(x)}
		& < & 1 - \Phi(x)
			\ < \frac{\phi(x)}{h_{2m+1,2}(x)} .
\end{eqnarray}

In the present manuscript we present all these bounds in a common framework and propose refinements. In Section~\ref{First Steps} we consider the derivative of $\phi/h - (1 - \Phi)$ for a smooth function $h : (0,\infty) \to (0,\infty)$ with $\lim_{x \to \infty} h(x) = \infty$, and these elementary considerations yield the bounds \eqref{Komatu}, \eqref{Pollak}, \eqref{Sampford} and a new lower bound. In Section~\ref{Continued Fractions} we consider approximations $\phi/h_k$ of $1 - \Phi$, where
$$
	h_k(x) \ = \ x + \frac{1}{x + 
				\displaystyle\frac{2}{\begin{array}{c}\ddots\\\strut\end{array} x +
					\displaystyle\frac{k-1}{x +
						\displaystyle\frac{k}{g_k(x)}}}}
$$
with smooth functions $g_k : (0,\infty) \to (0,\infty)$. It turns out that under general conditions on $g_k$, this yields upper and lower bounds $\phi/h_k$ of $1 - \Phi$.

In Section~\ref{Square Roots}, we consider functions $g_k(x)$ of the form
$$
	g_k(x) \ = \ \sqrt{c_k + (x/2)^2} + x/2
$$
with special constants $c_k \in [k+1/2, k+1]$, and this improves the bounds \eqref{Shenton 1} and \eqref{Shenton 2}. In Section~\ref{Rational Bounds} we consider
$$
	g_k(x) \ = \ \sqrt{c_k} + \lambda_k x
$$
with an additional constant $\lambda_k > 0$, which leads to purely rational functions $h_k$. The resulting bounds are compared to those in Section~\ref{Square Roots}. These rational functions $h_k$ may be improved substantially ba considering
$$
	g_k(x) \ = \ \sqrt{c_k} \exp(- \delta_k x) + x
$$
for some $\delta_k > 0$ as explained in Section~\ref{Exponential}.

Finally, in Section~\ref{Polynomials} we describe briefly a recurrence scheme to represent the functions $h_k$ as ratios $p_k/q_k$ rather than continued fractions.

\section{First Steps}
\label{First Steps}

Let $h : (0,\infty) \to (0,\infty)$ be a differentiable function with $\lim_{x \to \infty} h(x) = \infty$. Then the approximation error
$$
	\Delta(x) \ := \ \frac{\phi(x)}{h(x)} - (1 - \Phi(x))
$$
satisfies
\begin{equation}
\label{exact at infinity}
	\lim_{x \to \infty} \, \Delta(x) \ = \ 0
\end{equation}
and
\begin{equation}
\label{derivative1}
	\Delta'(x)
	\ = \ \frac{\phi(x)}{h(x)^2} \bigl( h(x)^2 - x h(x) - h'(x) \bigr) .
\end{equation}

Consider first $h(x) = x$. Then $h(x)^2 - x h(x) - h'(x) = - 1$, so $\Delta' < 0$ on $(0,\infty)$. Together with \eqref{exact at infinity} we obtain that $\Delta > 0$, which is inequality \eqref{finite continued fractions} for $k = 0$. However, in view of the bounds \eqref{Komatu} and \eqref{Pollak} of Komatu and Pollak, we try
$$
	h(x) \ := \ \sqrt{c + (x/2)^2} + x/2
	\ = \ \frac{\sqrt{4c + x^2} + x}{2}
$$
for some $c > 0$ to be specified later. Then $\Delta$ is well defined and continuous on $[0,\infty)$, and one verifies easily that
$$
	h(x)^2 - x h(x) - h'(x)
	\ = \ c - 1/2 - t(x,c)/2
$$
where
$$
	t(x,c) \ := \ \frac{x}{\sqrt{4c + x^2}} .
$$
Note that $t(\cdot,c) : [0,\infty) \to [0,1)$ is bijective and increasing. In case of $c = 1$,
$$
	\sign(\Delta'(x)) \ = \ \sign(1 - t(x,c)) \ = \ +1 .
$$
Thus $\Delta < 0$ on $[0,\infty)$, and we rediscover Komatu's lower bound in \eqref{Komatu}. Setting $c = 1/2$, we realize that
$$
	\sign(\Delta'(x)) \ = \ \sign(- t(x,c)) \ = \ -1 ,
$$
so $\Delta > 0$ on $[0,\infty)$, which implies the upper bound in \eqref{Komatu}.

Note that
$$
	\Delta(0) \ = \ 0
	\quad\text{if, and only if,}\quad
	h(0) \ = \ \sqrt{2/\pi} .
$$
The latter condition is satisfied if $c = 2/\pi$, which corresponds to Pollak's function
$$
	h(x) \ = \ \frac{\sqrt{8/\pi + x^2} + x}{2} .
$$
Indeed,
$$
	\sign(\Delta'(x))
	\ = \ \sign(2c - 1 - t(x,c))
	\ = \ \sign \Bigl( \frac{4 - \pi}{\pi} - t(x,2/\pi) \Bigr) .
$$
Thus $\Delta' > 0$ on $(0,x_o)$ and $\Delta' < 0$ on $(x_o,\infty)$, where $x_o$ solves the equation $t(x_o,2/\pi) = (4 - \pi)/\pi \in (0,1)$. This shows that $\Delta > 0$ on $(0,\infty)$, and we obtain Pollak's upper bound \eqref{Pollak}.

Now we go one step further: Since
$$
	\frac{\sqrt{4c + x^2} + x}{2}
	\ = \ x + \frac{\sqrt{4c + x^2} - x}{2}
	\ = \ x +\frac{2c}{\sqrt{4c + x^2} + x} ,
$$
we consider functions $h$ of the form
$$
	h(x) \ = \ x + \frac{1}{g(x)}
$$
with $g : (0,\infty) \to (0,\infty)$ differentiable. One can easily verify that
$$
	h(x)^2 - x h(x) - h'(x)
	\ = \ - \, \frac{g(x)^2 - x g(x) - 1 - g'(x)}{g(x)^2} ,
$$
so
\begin{eqnarray*}
	\sign(\Delta'(x))
	& = & \sign \bigl( h(x)^2 - x h(x) - h'(x) \bigr) \\
	& = & - \sign \bigl( g(x)^2 - x g(x) - 1 - g'(x) \bigr) .
\end{eqnarray*}

In case of $g(x) = x$, we obtain $\sign(\Delta'(x)) = +1$, so $\Delta > 0$ on $(0,\infty)$, i.e.\ \eqref{finite continued fractions} holds for $k = 1$. Again we can refine this considerably by considering
$$
	g(x) \ = \ \sqrt{c + (x/2)^2} + x/2
$$
for some $c > 0$. This leads to
\begin{eqnarray*}
	h(x)
	& = & x + \frac{2}{\sqrt{4c + x^2} + x}
		\ = \ x + \frac{\sqrt{4c + x^2} - x}{2c} \\
	& = & \frac{(2c - 1) x + \sqrt{4c + x^2}}{2c} ,
\end{eqnarray*}
and
$$
	\sign(\Delta'(x))
	\ = \ - \sign(c - 1 - g'(x))
	\ = \ - \sign \bigl( 2c - 3 - t(x,c) \bigr) .
$$
For $c = 2$, the latter sign equals $-1$ for all $x > 0$, so $\Delta > 0$ on $[0,\infty)$, and we obtain the upper bound \eqref{Sampford} of Sampford. On the other hand, if $c = \pi/2$, then $h(0) = 1/g(0) = \sqrt{2/\pi}$, so $\Delta(0) = 0$, and
$$
	\sign(\Delta'(x))
	\ = \ - \sign \bigl( \pi - 3 - t(x,\pi/2) \bigr) .
$$
Thus $\Delta' < 0$ on $(0,x_o)$ and $\Delta' > 0$ on $(x_o,\infty)$, where $t(x_o,\pi/2) = \pi - 3 \in (0,1)$, and we obtain a new lower bound:
\begin{equation}
\label{LB1}
	1 - \Phi(x) \ > \ \frac{\pi \phi(x)}{(\pi - 1) x + \sqrt{2\pi + x^2}}
	\quad\text{for} \ x > 0 .
\end{equation}
This corresponds to
$$
	h(x) \ = \ x + \frac{2}{\sqrt{2\pi + x^2} + x}
$$
which is strictly smaller than the function
$$
	h(x) \ = \ x + \frac{2}{\sqrt{4 + x^2} + x}
$$
corresponding to Komatu's lower bound in \eqref{Komatu}.

Figure~\ref{fig:B1} illustrates the bounds we have seen so far. Precisely, it shows the approximation error $\Delta = \phi/h - (1 - \Phi)$ for Komatu's lower bound \eqref{Komatu}, the new lower bound \eqref{LB1} as well as the upper bounds \eqref{Pollak} of Pollak and \eqref{Sampford} of Sampford.

\begin{figure}[h]
\centering
\includegraphics[width=0.7\textwidth]{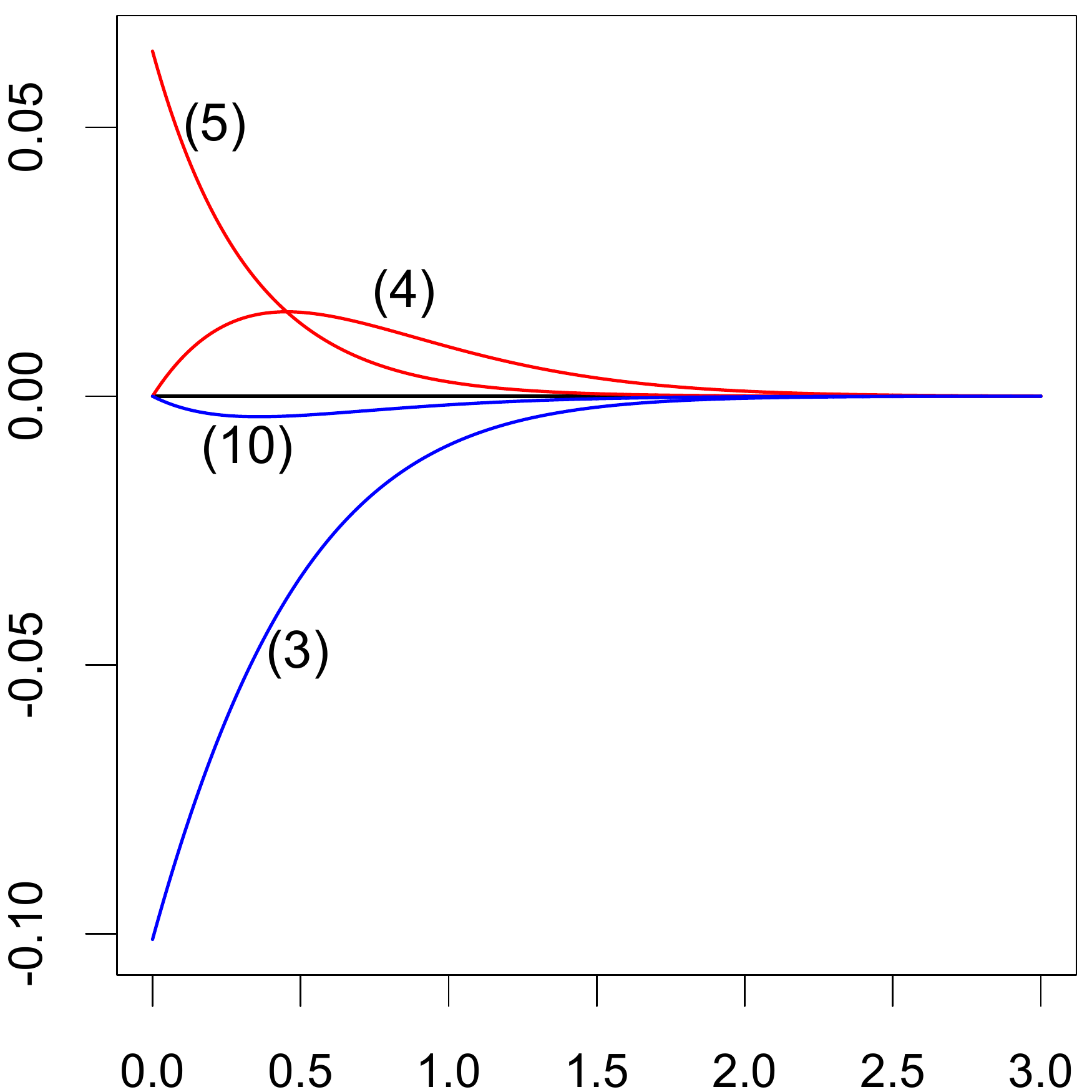}
\caption{Approximation errors $\Delta$ after first steps.}
\label{fig:B1}
\end{figure}

\section{Continued Fractions and General Bounds}
\label{Continued Fractions}

Recall that we started with an arbitrary function $h(x)$, then turned our attention to $h(x) = x + 1/g(x)$ with smooth $g$, and the special functions $g$ we used may also be written as $g(x) = x + 1/\tilde{g}(x)$ with another smooth function $\tilde{g} > 0$. After playing around with the resulting approximation error $\Delta = \phi/h - (1 - \Phi)$ and $\sign(\Delta')$, the following scheme seems to be promising:
$$
	h(x) \ = \ g_0(x) \ = \ x + \frac{1}{g_1(x)}
	\ = \ x + \frac{1}{x + \displaystyle \frac{2}{g_2(x)}}
$$
for some differentiable function $g_2 : (0,\infty) \to (0,\infty)$. Indeed, elementary calculations reveal that
\begin{eqnarray*}
	\sign \bigl( \Delta'(x) \bigr)
	& = & \quad \sign \bigl( g_0(x)^2 - x h_0(x) - 0 - g_0'(x) \bigr) \\
	& = & - \sign \bigl( g_1(x)^2 - x g_1(x) - 1 - g_1'(x) \bigr) \\
	& = & \quad \sign \bigl( g_2(x)^2 - x g_2(x) - 2 - g_2'(x) \bigr) ,
\end{eqnarray*}
and this suggests a more general result which will be proved via induction:

\begin{Lemma}
\label{lem:continued fractions}
Let $g_0, g_1, g_2, \ldots$ be differentiable functions from $(0,\infty)$ to $(0,\infty)$, and define
$$
	h_0(x) \ := \ g_0(x) ,
	\quad
	h_1(x) \ := \ x + \frac{1}{g_1(x)} ,
	\quad
	h_2(x) \ := \ x + \frac{1}{x + \displaystyle \frac{2}{g_2(x)}}
$$
and, for integers $k > 2$,
$$
	h_k(x) \ := \ x + \frac{1}{x + 
				\displaystyle\frac{2}{\begin{array}{c}\ddots\\\strut\end{array} x +
					\displaystyle\frac{k-1}{x +
						\displaystyle\frac{k}{g_k(x)}}}} .
$$
Then the approximation errors $\Delta_k := \phi/h_k - (1 - \Phi)$ satisfy
$$
	\sign(\Delta_k'(x))
	\ = \ (-1)^k \sign \bigl ( g_k(x)^2 - x g_k(x) - k - g_k'(x) \bigr) .
$$
\end{Lemma}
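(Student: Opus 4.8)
The plan is to reduce everything to the sign identity for $h_k^2 - x h_k - h_k'$ supplied by \eqref{derivative1}, and then to track how this expression transforms as one peels off the continued fraction one level at a time. The first step is to invoke \eqref{derivative1}: since $\phi(x)/h_k(x)^2 > 0$ for every $x > 0$, it gives at once
$$
	\sign\bigl(\Delta_k'(x)\bigr)
	\ = \ \sign\bigl(h_k(x)^2 - x h_k(x) - h_k'(x)\bigr) ,
$$
so it remains only to relate the right-hand side to $g_k^2 - x g_k - k - g_k'$.

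To organize the peeling I would fix $k$ and, for a positive differentiable $u$ on $(0,\infty)$ and an integer $j \ge 0$, abbreviate $E_j[u] := u^2 - x u - j - u'$. I then introduce the partial continued fractions $f_0, f_1, \ldots, f_k$ built from the inside out by $f_k := g_k$ and $f_j := x + (j+1)/f_{j+1}$ for $0 \le j < k$. Unwinding the definitions shows $f_0 = h_k$, and since $x > 0$ each $f_j$ is again a positive differentiable function. In this language the target becomes $\sign(E_0[f_0]) = (-1)^k \sign(E_k[f_k])$.

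The crux is a one-step identity. Setting $u = x + m/v$ with $m := j+1$ and $v := f_{j+1}$ and differentiating --- exactly the computation already carried out for $m = 1$ in Section~\ref{First Steps} --- I expect to obtain
$$
	E_j[u]
	\ = \ \frac{m}{v^2}\bigl(x v + m - v^2 + v'\bigr)
	\ = \ -\frac{m}{v^2}\, E_m[v]
	\ = \ -\frac{j+1}{f_{j+1}^2}\, E_{j+1}[f_{j+1}] ,
$$
where the middle equality uses $j = m-1$. Because $(j+1)/f_{j+1}^2 > 0$, this forces the sign to flip at each level: $\sign(E_j[f_j]) = -\sign(E_{j+1}[f_{j+1}])$ for $0 \le j < k$. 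This single identity is the only real obstacle; everything else is bookkeeping, and since the case $m = 1$ is already verified in the text, the general case is a routine repetition of that calculation with the shifted constant $j = m-1$.

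Finally, a finite induction on $j$ --- the induction promised before the statement --- telescopes these flips:
$$
	\sign(E_0[f_0]) \ = \ -\sign(E_1[f_1]) \ = \ \sign(E_2[f_2]) \ = \ \cdots \ = \ (-1)^k \sign(E_k[f_k]) .
$$
Substituting $f_0 = h_k$ and $f_k = g_k$ and combining with the first display yields $\sign(\Delta_k'(x)) = (-1)^k \sign\bigl(g_k(x)^2 - x g_k(x) - k - g_k'(x)\bigr)$, which is the assertion. The special cases $k = 0, 1, 2$ are covered automatically, $k = 0$ involving no flip at all.
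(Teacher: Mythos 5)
Your proposal is correct and takes essentially the same route as the paper: your one-step identity $E_j[f_j] = -\frac{j+1}{f_{j+1}^2}\,E_{j+1}[f_{j+1}]$ is exactly the algebraic computation the paper carries out in its induction step (substituting $g_k(x) = x + (k+1)/g_{k+1}(x)$), and your finite telescoping over the partial fractions $f_0, \ldots, f_k$ is merely a repackaging of the paper's induction on $k$. The identity checks out as written, so there is no gap.
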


\begin{proof}[\bf Proof of Lemma~\ref{lem:continued fractions}]
We know that the claim is correct for $k = 0, 1$. Now suppose that it is correct for an arbitrary integer $k \ge 1$, and let $g_k(x) = x + (k+1)/g_{k+1}(x)$ for a differentiable function $g_{k+1} : (0,\infty) \to (0,\infty)$. Then
\begin{eqnarray*}
	\lefteqn{ g_k(x)^2 - x g_k(x) - k - g_k'(x) } \\
	& = & g_k(x) (g_k(x) - x) - k - g_k'(x) \\
	& = & \Bigl( x + \frac{k+1}{g_{k+1}(x)} \Bigr) \frac{k+1}{g_{k+1}(x)}
		- k - 1 + \frac{(k+1) g_{k+1}'(x)}{g_{k+1}(x)^2} \\
	& = & - \, \frac{k+1}{g_{k+1}(x)^2} \,
		\bigl( g_{k+1}(x)^2 - x g_{k+1}(x) - (k+1) - g_{k+1}'(x) \bigr) .
\end{eqnarray*}
In particular, $\sign \bigl( \Delta'(x) \bigr)$ equals
$$
	(-1)^{k+1} \sign \Bigl( g_{k+1}(x)^2 - x g_{k+1}(x) - (k+1) - g_{k+1}'(x) \bigr) .
$$\\[-8ex]
\end{proof}

With Lemma~\ref{lem:continued fractions} at hand we can derive bounds for $1 - \Phi$, similarly as in Section~\ref{First Steps}. Note first that $g_k(x) := x$ yields $g_k(x)^2 - x g_k(x) - k - g_k'(x) = - (k+1)$, so the corresponding approximation error $\Delta$ satisfies $\sign(\Delta'(x)) = (-1)^{k+1}$. This implies \eqref{finite continued fractions} for arbitrary $k \ge 1$. But Lemma~\ref{lem:continued fractions} leads to a refined criterion:

\begin{Lemma}
\label{lem:criterion}
In the setting of Lemma~\ref{lem:continued fractions}, suppose that $g_k$ is defined and continuous on $[0,\infty)$ such that $h_k(0) = \sqrt{2/\pi}$, i.e.\ $\Delta_k(0) = 0$. If there exists a point $x_k > 0$ such that
$$
	\sign \bigl( g_k(x)^2 - x g_k(x) - k - g_k'(x) \bigr)
	\ = \ \sign(x_k - x) ,
$$
then $\Delta_k > 0$ on $(0,\infty)$ for even $k$, and $\Delta_k < 0$ on $(0,\infty)$ for odd $k$.
\end{Lemma}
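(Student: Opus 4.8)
The plan is to combine Lemma~\ref{lem:continued fractions} with the two boundary values of $\Delta_k$ and a simple unimodality argument. Writing $E_k(x) := g_k(x)^2 - x g_k(x) - k - g_k'(x)$, the hypothesis states that $\sign(E_k(x)) = \sign(x_k - x)$, so Lemma~\ref{lem:continued fractions} gives
$$
	\sign(\Delta_k'(x)) \ = \ (-1)^k \sign(x_k - x) \quad\text{for } x > 0 .
$$
For even $k$ this says that $\Delta_k$ is strictly increasing on $(0,x_k)$ and strictly decreasing on $(x_k,\infty)$, i.e.\ $\Delta_k$ is unimodal with a strict maximum at $x_k$; for odd $k$ the roles are reversed and $\Delta_k$ has a strict minimum at $x_k$.

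Next I would pin down the two boundary values. By assumption $\Delta_k(0) = 0$. For the behaviour at $+\infty$, I observe that every partial quotient in the continued fraction defining $h_k$ is positive, so $h_k(x) \ge x$ and hence $\lim_{x \to \infty} h_k(x) = \infty$; the general fact \eqref{exact at infinity}, applied to $h = h_k$, then gives $\lim_{x \to \infty} \Delta_k(x) = 0$. Thus $\Delta_k$ vanishes at both ends of $(0,\infty)$.

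Finally I would conclude from the shape of $\Delta_k$. For even $k$: on $(0,x_k]$ the function strictly increases from $\Delta_k(0) = 0$, so $\Delta_k > 0$ there; on $[x_k,\infty)$ it strictly decreases toward its limit $0$, so $\Delta_k(x) > 0$ for every finite $x > x_k$ as well. Hence $\Delta_k > 0$ on $(0,\infty)$. For odd $k$ the same reasoning with inequalities reversed — a strict decrease from $0$ followed by a strict increase back up to the limit $0$ — yields $\Delta_k < 0$ on $(0,\infty)$. I do not expect a genuine obstacle, since the analytic content is entirely carried by Lemma~\ref{lem:continued fractions}; the only points requiring care are the justification of $\lim_{x \to \infty} \Delta_k(x) = 0$ through the bound $h_k(x) \ge x$, and the passage to \emph{strict} inequalities, which is legitimate because $\Delta_k'$ has a strict sign throughout each of the two subintervals separated by $x_k$.
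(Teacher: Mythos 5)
Your proof is, in substance, exactly the argument the paper intends: the paper prints no separate proof of this lemma, treating it as immediate from Lemma~\ref{lem:continued fractions} together with the pattern already carried out for Pollak's bound in Section~\ref{First Steps} --- namely $\Delta_k(0)=0$, the sign of $\Delta_k'$ making $\Delta_k$ unimodal with its extremum at $x_k$, and $\Delta_k(x)\to 0$ as $x\to\infty$ via \eqref{exact at infinity}. Your write-up simply makes those three steps explicit, so your route and the intended one coincide.

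One step needs repair, though: the justification ``every partial quotient is positive, hence $h_k(x)\ge x$'' is valid only for $k\ge 1$, where indeed $h_k(x)=x+1/(\text{positive quantity})>x$. For $k=0$ there is no leading summand $x$: in the setting of Lemma~\ref{lem:continued fractions} one has $h_0=g_0$, an arbitrary positive differentiable function, and nothing in the hypotheses forces $g_0(x)\ge x$ or even $g_0(x)\to\infty$, so \eqref{exact at infinity} is not directly applicable and $\lim_{x\to\infty}\Delta_0(x)=0$ is unjustified as stated. The gap is genuine but easily closed without any limit at all: for even $k$, on $(x_k,\infty)$ the function $\Delta_k$ is strictly decreasing and satisfies the crude bound $\Delta_k(x) > -(1-\Phi(x))$, simply because $\phi/h_k>0$. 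If $\Delta_k(x_1)\le 0$ for some $x_1>x_k$, strict monotonicity would give $\Delta_k(x)\le\Delta_k(x_1+1)<0$ for all $x\ge x_1+1$, contradicting $-(1-\Phi(x))\to 0$; hence $\Delta_k>0$ on all of $(x_k,\infty)$. (For odd $k$, necessarily $k\ge 1$, your bound $h_k(x)>x$ is correct and yields $\Delta_k(x)<\phi(x)/x\to 0$, so the symmetric argument --- or your original limit argument --- goes through unchanged.) With this patch for $k=0$, your proof is complete and agrees with the paper.
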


The requirement that $h_k(0) = \sqrt{2/\pi}$ for $k = 0, 1, 2, \ldots$ means that $c_k^* := g_k(0)^2$ satisfies
\begin{equation}
\label{ck recursively 1}
	c_0^* \ = \ \frac{2}{\pi}
	\quad\text{and}\quad
	c_k^* \ = \ \frac{k^2}{c_{k-1}^*} \ \ \text{for} \ k = 1, 2, 3, \ldots .
\end{equation}
In other words,
\begin{equation}
\label{ck recursively 2}
	c_0^* \ = \ \frac{2}{\pi} ,
	\quad
	c_1^* \ = \ \frac{\pi}{2}
	\quad\text{and}\quad
	c_k^* \ = \ \Bigl( \frac{k}{k-1} \Bigr)^2 c_{k-2}^* \ \ \text{for} \ k = 2, 3, 4, \ldots .
\end{equation}
This leads to
\begin{eqnarray}
\label{ck explicitly 1}
	c_{2m}^*
	& = & \Bigl( \frac{2 \cdot 4 \cdot 6 \, \cdots \, (2m)}
	                  {1\cdot 3 \cdot 5 \, \cdots \, (2m-1)} \Bigr)^2 \, \frac{2}{\pi} , \\
\label{ck explicitly 2}
	c_{2m+1}^*
	& = & \Bigl( \frac{1\cdot 3 \cdot 5 \, \cdots \, (2m+1)}
	                  {2 \cdot 4 \, \cdots \, (2m)} \Bigr)^2 \, \frac{\pi}{2}
\end{eqnarray}
for integers $m \ge 1$. For later purposes it is crucial to have good bounds for these constants $c_k^*$. Numerical experiments led to the formulation of the following result:

\begin{Lemma}
\label{bounding ck}
The constants $c_k^*$ just introduced satisfy
$$
	\frac{1}{8(k+1)}
	\ < \ c_k^* - k - 1/2
	\ < \ \frac{1}{8c_k^*}
	\ < \ \frac{1}{8(k + 1/2)}
	\quad \text{for all} \ k \ge 0 .
$$
\end{Lemma}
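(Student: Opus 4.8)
The plan is to strip the four-term chain down to its two substantial inequalities and then to study the single quantity $S_k := (c_k^*)^2 - (k+1/2) c_k^*$. The rightmost inequality $1/(8 c_k^*) < 1/(8(k+1/2))$ is merely $c_k^* > k + 1/2$, which is contained in the leftmost one once that is known; so it suffices to prove $c_k^* - k - 1/2 > 1/(8(k+1))$ and $c_k^* - k - 1/2 < 1/(8 c_k^*)$. Writing $a_k := c_k^* - k - 1/2$, the upper bound is exactly $S_k < 1/8$, while $S_k = a_k^2 + (k+1/2) a_k$ is increasing in $a_k > 0$, so the lower bound is equivalent to $S_k > (k+1/2)/(8(k+1)) + 1/(64(k+1)^2)$.

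My main idea is to show that $S_k$ increases strictly to the limit $1/8$. The upper bound $S_k < 1/8$ is then automatic, and the lower bound follows from a quantitative control of the gap $1/8 - S_k$. The value of the limit is where the initial data $c_0^* = 2/\pi$ and $c_1^* = \pi/2$ must enter: feeding Stirling's formula (equivalently the Wallis product) into the closed forms \eqref{ck explicitly 1} and \eqref{ck explicitly 2} yields $c_k^* = k + 1/2 + 1/(8(k+1)) + O(k^{-2})$, from which $S_k = a_k c_k^* \to 1/8$. For the monotonicity I would use the basic recursion $c_k^* c_{k-1}^* = k^2$ of \eqref{ck recursively 1}: with $d_k := c_k^* - c_{k-1}^*$ and $s_k := c_k^* + c_{k-1}^*$ one has $s_k^2 - d_k^2 = 4k^2$ and $S_k - S_{k-1} = d_k(s_k - k) - s_k/2$, so that $S_k > S_{k-1}$ becomes the one-variable inequality $\sqrt{4k^2 + d_k^2}\,(2 d_k - 1) > 2 k d_k$, to be checked from the bound $d_k = 1 + a_k - a_{k-1}$ near $1$.

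A fully self-contained route, free of asymptotics, is to split into even and odd $k$ by means of \eqref{ck recursively 2} and to reduce both bounds to sharp Wallis-type inequalities. For $k = 2m$ one has $c_{2m}^* = (2/\pi) A_m^2$ with $A_m := \prod_{j=1}^m 2j/(2j-1)$, so the two required bounds are two-sided estimates of the Wallis partial product $A_m^2$; these I would obtain by introducing auxiliary sequences $A_m^2/\rho(m)$ with $\rho(m)$ tuned to the target constants (e.g.\ $\rho(m) = 2m + 1/2 + 1/(8(2m+1))$ for the lower bound), proving them monotone by the ratio test — which reduces to a polynomial inequality in $m$ — and placing them on the correct side of their common limit $\pi/2$. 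The odd case $k = 2m+1$, with $c_{2m+1}^* = (\pi/2)\bigl(\prod_{j=1}^m (2j+1)/(2j)\bigr)^2$, is handled identically.

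I expect the tightness to be the main obstacle. Because the stated bounds are asymptotically exact, the decisive terms always cancel at leading order: in the monotonicity inequality above both sides agree to order $k^{-1}$, and in the limit $1/8 - S_k$ is only of order $k^{-2}$, so the expansion of $c_k^*$ (or the auxiliary polynomial inequalities in $m$) must be carried far enough to pin down the sign, and the first few indices $k = 0, 1, 2$ must be verified directly. Organising the two parities — which carry the different constants $2/\pi$ and $\pi/2$ — into the single uniform statement for all $k \ge 0$ is the remaining bookkeeping.
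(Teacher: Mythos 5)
Your ``fully self-contained route'' is, in substance, the paper's own proof. The paper compares $c_k^*$ with tuned sequences $d_k = k+1/2+\bigl(8(k+\gamma)\bigr)^{-1}$, shows via the two-step recursion \eqref{ck recursively 2} that the ratio $c_k^*/d_k$ is strictly monotone along each parity class --- the sign of $c_{k+2}^*/d_{k+2} - c_k^*/d_k$ reduces, after discarding positive factors, to the linear expression $(3-6\gamma)k + 8 - 5\gamma - 4\gamma^2$, positive for $\gamma=1/2$ and negative for $\gamma=1$ --- and concludes from the limit $c_k^*/d_k \to 1$. That is exactly your ``auxiliary sequences $A_m^2/\rho(m)$, monotone by the ratio test, placed on the correct side of the common limit,'' since $c_{2m}^* = (2/\pi)A_m^2$. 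Two implementation remarks. First, no Stirling or Wallis asymptotics are needed: the limit enters only through $c_k^*/k \to 1$, which follows from the free bound $k+1/2 < c_k^* < k+1$ obtained in Section~\ref{Square Roots}. Second, your sketch leaves open how the middle bound $c_k^* - k - 1/2 < (8c_k^*)^{-1}$ fits the tuned-$\rho$ scheme, since there the comparison constant involves $c_k^*$ itself; the paper bootstraps, rerunning the monotone-ratio argument with $d_k := k+1/2+(8c_k^*)^{-1}$ and reducing the claim to $c_k^* < k+1/2+(k+7/4)/(k+2)^2$, which follows from the already-proven $\gamma=1/2$ bound. (Your reformulation of that middle bound as $S_k < 1/8$ with $S_k := c_k^*(c_k^*-k-1/2)$ is genuinely neat, and your pruning of the four-term chain to two inequalities is correct.)

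Your preferred route --- step-one monotonicity $S_{k-1} < S_k \nearrow 1/8$ --- contains a gap that is more than the ``remaining bookkeeping'' you anticipate, and I would not expect it to close without importing bounds strictly sharper than the lemma itself. Your criterion $\sqrt{4k^2+d_k^2}\,(2d_k-1) > 2kd_k$, with $d_k := c_k^* - c_{k-1}^*$, squares to $4k^2(3d_k-1)(d_k-1) + d_k^2(2d_k-1)^2 > 0$, i.e.\ to $1 - d_k < d_k^2(2d_k-1)^2/\bigl(4k^2(3d_k-1)\bigr) = (8k^2)^{-1}\bigl(1+O(k^{-1})\bigr)$. Writing $a_k := c_k^* - k - 1/2$, this demands $a_{k-1} - a_k < (8k^2)^{-1}\bigl(1+O(k^{-1})\bigr)$. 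But even the \emph{finished} lemma only gives $a_{k-1} - a_k < \frac{1}{8(k-1/2)} - \frac{1}{8(k+1)} = \frac{3}{16(k-1/2)(k+1)} \approx \frac{3}{16k^2}$, overshooting the tolerance $\frac{2}{16k^2}$; and in truth $a_k = \frac{1}{8(k+1/2)} + O(k^{-3})$, so that $a_{k-1}-a_k \approx \frac{1}{8(k^2-1/4)}$ sits right at the threshold: the two sides of your monotonicity inequality, each of size $O(k)$, differ only at order $k^{-3}$. Deciding that sign therefore requires two-sided control of $a_k$ to precision $o(k^{-2})$ --- finer than the statement being proved --- so the route is circular as sketched (your phrase ``to be checked from $d_k$ near $1$'' hides all of the difficulty). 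The paper sidesteps precisely this by moving in steps of two within a parity class, where the clean recursion $c_{k+2}^* = \bigl((k+2)/(k+1)\bigr)^2 c_k^*$ from \eqref{ck recursively 2} turns the comparison into an explicit polynomial whose sign is visible without cancellation; your fallback inherits that robustness, your headline route does not.
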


The proof of this lemma will be postponed to the next section, because there we get the initial bound $k+1/2 < c_k^* < k+1$ almost for free.

\begin{Remark}
Note that for even integers $k \ge 2$,
$$
	\sqrt{c_k^*} \ = \ \sqrt{2/\pi} \, 2^k {\binom{k}{k/2}}^{-1} ,
$$
so Lemma~\ref{bounding ck} yields
$$
	\sqrt{\pi} \binom{k}{k/2} 2_{}^{-(k+1/2)} \sqrt{k + 1/2}
	\ = \ \sqrt{\frac{k + 1/2}{c_k^*}}
	\ \in \ \biggl[ 1 - \frac{1}{16(k+1/2)^2}, \, 1 \biggr] .
$$
\end{Remark}

\section{Refining Shenton's Bounds}
\label{Square Roots}

Starting from Lemma~\ref{lem:continued fractions} we consider
$$
	g_k(x) \ := \ \sqrt{c_k + (x/2)^2} + x/2 \ > \ x
$$
for some constant $c_k > 0$ yet to be specified. Now
\begin{eqnarray*}
	g_k(x)^2 - x g_k(x) - k - g_k'(x)
	& = & c_k - k - 1/2 - t(x,c_k)/2 .
\end{eqnarray*}
Thus
$$
	\sign(\Delta_k'(x))
	\ = \ (-1)^k \sign \bigl( 2 (c_k - k - 1/2) - t(x,c_k) \bigr) .
$$
In case of $c_k = k+1$, the right hand side equals $(-1)^k \sign(1 - t(x,c_k)) = (-1)^k$, and for $c_k = k + 1/2$, the right hand side equals $(-1)^k \sign(- t(x,c_k)) = (-1)^{k+1}$ for all $x > 0$. This leads to Shenton's and Kouba's strikt bounds in \eqref{Shenton 1} and \eqref{Shenton 2}. It entails also that the constants $c_k^*$ from the previous section satisfy
$$
	c_k^* \ \in \ (k + 1/2, k + 1) .
$$
For $h_k(0)$ is a continuous and strictly monotone function of $c_k \in [k+1/2, k+1]$ with extremal values being strictly smaller and strictly larger than $\sqrt{2/\pi}$. Setting $c_k = c_k^*$ yields a function $g_k$ satisfying the criterion of Lemma~\ref{lem:criterion} with $x_k$ solving the equation
$$
	t(x_k,c_k^*) \ = \ 2 (c_k^* - k - 1/2) \in (0,1) ,
$$
i.e.
\begin{equation}
\label{xk}
	x_k
	\ = \ \frac{2 \sqrt{c_k^*}(c_k^* - k - 1/2)}{\sqrt{1/4 - (c_k^* - k - 1/2)^2}}
	\ = \ \frac{2 \sqrt{c_k^*}(c_k^* - k - 1/2)}{\sqrt{(c_k^* - k)(k+1 - c_k^*)}} .
\end{equation}
These considerations yield already the first part of our main result.

\begin{Theorem}
\label{dabigone}
Let
$$
	h_0(x) \ := \ \sqrt{2/\pi + (x/2)^2} + x/2 ,
	\quad
	h_1(x) \ := \ x + \frac{1}{\sqrt{\pi/2 + (x/2)^2} + x/2}
$$
and, for integers $k \ge 2$,
$$
	h_k(x) \ := \
	x + \displaystyle\frac{1}{x + 
			\displaystyle\frac{2}{\begin{array}{c}\ddots\\\strut\end{array} x +
				\displaystyle\frac{k}{\sqrt{c_k^* + (x/2)^2} + x/2}}} .
$$
Then the approximation errors $\Delta_k := \phi/h_k - (1 - \Phi)$ satisfy the following inequalities:
$$
	\Delta_0 \ > \ \Delta_2 \ > \ \Delta_4 \ > \ \cdots \ > \ 0
	\quad\text{and}\quad
	\Delta_1 \ < \ \Delta_3 \ < \ \Delta_5 \ < \ \cdots \ < \ 0
$$
on $(0,\infty)$. Moreover,
$$
	\max_{x > 0} \, \bigl| \Delta_k(x) \bigr|
	\ = \ \bigl| \Delta_k(x_k) \bigr|
	\ < \ \frac{1}{32 (k + 1/2)^2}
$$
with $x_k$ given by \eqref{xk}.
\end{Theorem}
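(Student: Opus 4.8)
Throughout, for a constant $c>0$ write $g^{(c)}(x):=\sqrt{c+(x/2)^2}+x/2$ and let $h_k^{(c)},\Delta_k^{(c)}$ denote the functions of Lemma~\ref{lem:continued fractions} built with bottom function $g_k=g^{(c)}$, so that the theorem concerns $h_k=h_k^{(c_k^*)}$ and $\Delta_k=\Delta_k^{(c_k^*)}$. I would organize the proof around three claims: the sign pattern together with the location of the extremum, the monotone nesting, and the explicit bound.

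First I would settle the signs and the extremum. By construction $h_k(0)=\sqrt{2/\pi}$, hence $\Delta_k(0)=0$, and the calculation at the beginning of this section gives $\sign(\Delta_k'(x))=(-1)^k\sign\bigl(2(c_k^*-k-1/2)-t(x,c_k^*)\bigr)$. Since $c_k^*\in(k+1/2,k+1)$ we have $2(c_k^*-k-1/2)\in(0,1)$, and $t(\cdot,c_k^*)$ is increasing and bijective onto $[0,1)$, so there is a unique $x_k>0$ solving $t(x_k,c_k^*)=2(c_k^*-k-1/2)$, namely \eqref{xk}, and $\sign(\Delta_k'(x))=(-1)^k\sign(x_k-x)$. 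Lemma~\ref{lem:criterion} then yields $\Delta_k>0$ on $(0,\infty)$ for even $k$ and $\Delta_k<0$ for odd $k$; moreover $\Delta_k$ is strictly monotone on either side of $x_k$, so with $\Delta_k(0)=\Delta_k(\infty)=0$ the extremum of $|\Delta_k|$ is attained exactly at $x_k$, giving $\max_{x>0}|\Delta_k(x)|=|\Delta_k(x_k)|$.

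Next the nesting. The key observation is that all $h_k$ share the same ``head'': writing $F_k$ for the map sending the value at depth $k$ of the continued fraction to its value at depth $0$ (a composition of the $k$ decreasing M\"obius maps $v\mapsto x+j/v$, $j=k,\dots,1$, hence increasing for even $k$ and decreasing for odd $k$), one has $h_k=F_k(g_k)$ and, by collapsing the two lowest levels of $h_{k+2}$, also $h_{k+2}=F_k(\tilde g)$ with $\tilde g(x):=x+(k+1)\big/\bigl(x+(k+2)/g_{k+2}(x)\bigr)$. Since $\Delta_k-\Delta_{k+2}=\phi\,(h_{k+2}-h_k)/(h_kh_{k+2})$, the monotonicity of $F_k$ reduces the claimed nesting in both parities to the single inequality $\tilde g>g_k$ on $(0,\infty)$. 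To prove it I would use that $g_k$ is the positive root of $p(g):=g^2-xg-c_k^*$, which is increasing on $(x/2,\infty)\ni\tilde g,g_k$, so that $\tilde g>g_k$ is equivalent to $p(\tilde g)>0$. Substituting $\tilde g=x+(k+1)/u$ with $u=x+(k+2)/g_{k+2}$, clearing denominators, reducing powers of $g_{k+2}$ via $g_{k+2}^2=xg_{k+2}+c_{k+2}^*$, and invoking $c_{k+2}^*=\bigl((k+2)/(k+1)\bigr)^2c_k^*$ from \eqref{ck recursively 2}, the constant and top-order terms cancel and $p(\tilde g)$ collapses to a positive multiple of $g_{k+2}(x)\bigl(Ax^2+B\bigr)+c_{k+2}^*Ax$ with $A:=k+1-c_k^*>0$ and $B:=2(k+2)A-(k+1)$. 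Thus $\tilde g>g_k$ follows once $B>0$, i.e.\ $c_k^*<k+1/2+1/\bigl(2(k+2)\bigr)$, which Lemma~\ref{bounding ck} supplies since $c_k^*-k-1/2<1/\bigl(8(k+1/2)\bigr)<1/\bigl(2(k+2)\bigr)$. I expect this reduction to be the main obstacle, the delicate point being that the crude bound $c_k^*<k+1$ does \emph{not} suffice, so the sharper Lemma~\ref{bounding ck} is genuinely needed.

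Finally the explicit bound. Here I would compare $\Delta_k=\Delta_k^{(c_k^*)}$ with Shenton's function $\Delta_k^{(k+1/2)}$. For fixed parity $F_k$ is monotone and $g^{(c)}$ is increasing in $c$, so $\Delta_k^{(c)}(x)$ is monotone in $c$; since $k+1/2<c_k^*$ one checks in both parities that $|\Delta_k(x)|<|\Delta_k^{(k+1/2)}(x)|$ for every $x>0$. Moreover $\sign\bigl((\Delta_k^{(k+1/2)})'\bigr)=(-1)^{k+1}$ is constant on $(0,\infty)$, so $|\Delta_k^{(k+1/2)}|$ is maximal at $x=0$. Evaluating the continued fraction at $x=0$, where it is multiplicative in the bottom value with the value $\sqrt{c_k^*}$ normalised to produce $\sqrt{2/\pi}$, gives $h_k^{(k+1/2)}(0)=\sqrt{2/\pi}\,\rho^{-1/2}$ for even $k$ and $\sqrt{2/\pi}\,\rho^{1/2}$ for odd $k$, with $\rho:=c_k^*/(k+1/2)>1$; using $\phi(0)/\sqrt{2/\pi}=1/2$ this yields $|\Delta_k^{(k+1/2)}(0)|=\tfrac12|\rho^{\pm1/2}-1|\le\tfrac12(\sqrt\rho-1)$. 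Hence, with $\rho-1=(c_k^*-k-1/2)/(k+1/2)<1/\bigl(8(k+1/2)^2\bigr)$ by Lemma~\ref{bounding ck},
$$
	\bigl|\Delta_k^{(k+1/2)}(0)\bigr| \ \le \ \tfrac12(\sqrt\rho-1) \ = \ \frac{\rho-1}{2(\sqrt\rho+1)} \ < \ \frac{\rho-1}{4} \ < \ \frac{1}{32(k+1/2)^2},
$$
and combining this with the chain $|\Delta_k(x_k)|<|\Delta_k^{(k+1/2)}(x_k)|\le|\Delta_k^{(k+1/2)}(0)|$ established above completes the proof.
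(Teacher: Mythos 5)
Your proposal is correct and mirrors the paper's own proof in all three parts: the sign pattern and maximizer $x_k$ via Lemma~\ref{lem:criterion}, the reduction of the nesting (through the parity of the composed M\"obius maps $v \mapsto x + j/v$, which the paper leaves implicit) to the single bottom-level inequality $\tilde{g} > g_k$, and the maximal-error bound by comparison with the $c_k = k+1/2$ Shenton function, whose error is monotone with maximum at $x=0$, combined with the refined upper bound of Lemma~\ref{bounding ck}. The only divergence is how $\tilde{g} > g_k$ is verified --- you test the sign of the quadratic $p(g) = g^2 - xg - c_k^*$ at $\tilde{g}$ and reduce powers via $g_{k+2}^2 = x\,g_{k+2} + c_{k+2}^*$, whereas the paper rescales using $c_{k+2}^* = c_k^*/\rho_k^2$ and compares the square-root expressions directly --- but both computations collapse to the identical sufficient condition $c_k^* \le k + 1/2 + \frac{1}{2k+4}$ supplied by Lemma~\ref{bounding ck} (minor nitpick: your chain $\frac{1}{8(k+1/2)} < \frac{1}{2(k+2)}$ degenerates to equality at $k=0$, but the strict first inequality from the lemma still yields $B>0$ there), so this is the same argument in a different algebraic dress.
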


\begin{Remark}
The first three bounds $\phi/h_k$ from Theorem~\ref{dabigone} are given by the following functions $h_k$:
\begin{eqnarray*}
	h_0(x) & = & \frac{\sqrt{8/\pi + x^2} + x}{2} , \\
	h_1(x) & = & x + \frac{1}{\sqrt{\pi/2 + (x/2)^2} + x/2}
		\ = \ \frac{(\pi - 1)x + \sqrt{2\pi + x^2}}{\pi} , \\
	h_2(x) & = & x + \frac{1}{x + \displaystyle
			\frac{2}{\sqrt{8/\pi + (x/2)^2} + x/2}}
		\ = \ x + \frac{8/\pi}{(8/\pi - 1) x + \sqrt{32/\pi + x^2}} .
\end{eqnarray*}
Note that the upper bound $\phi/h_2$ for $1 - \Phi$ is better than Pollak's upper bound $\phi/h_0$ in \eqref{Pollak}. It is also better than the upper bound \eqref{Sampford} of Sampford, because the latter equals $\phi/h$ with
$$
	h(x) \ = \ x + \frac{1}{\sqrt{2 + (x/2)^2} + x/2}
	\ = \ x + \frac{1}{x + \displaystyle
		\frac{2}{\sqrt{2 + (x/2)^2} + x/2}}
	\ < \ h_2(x)
$$
for $x > 0$.
\end{Remark}

Figure~\ref{fig:B2} shows the approximation errors $\Delta_k$ from Theorem~\ref{dabigone} for some choices of $k$. On the left hand side one sees $\Delta_0, \Delta_1, \ldots, \Delta_5$. Note that the worst functions $\Delta_0, \Delta_1$ correspond to the bounds \eqref{Pollak} and \eqref{LB1}, respectively, also depicted in Figure~\ref{fig:B1}. On the right hand side one sees $\Delta_2, \Delta_3, \ldots, \Delta_9$.

\begin{figure}[h]
\includegraphics[width=0.49\textwidth]{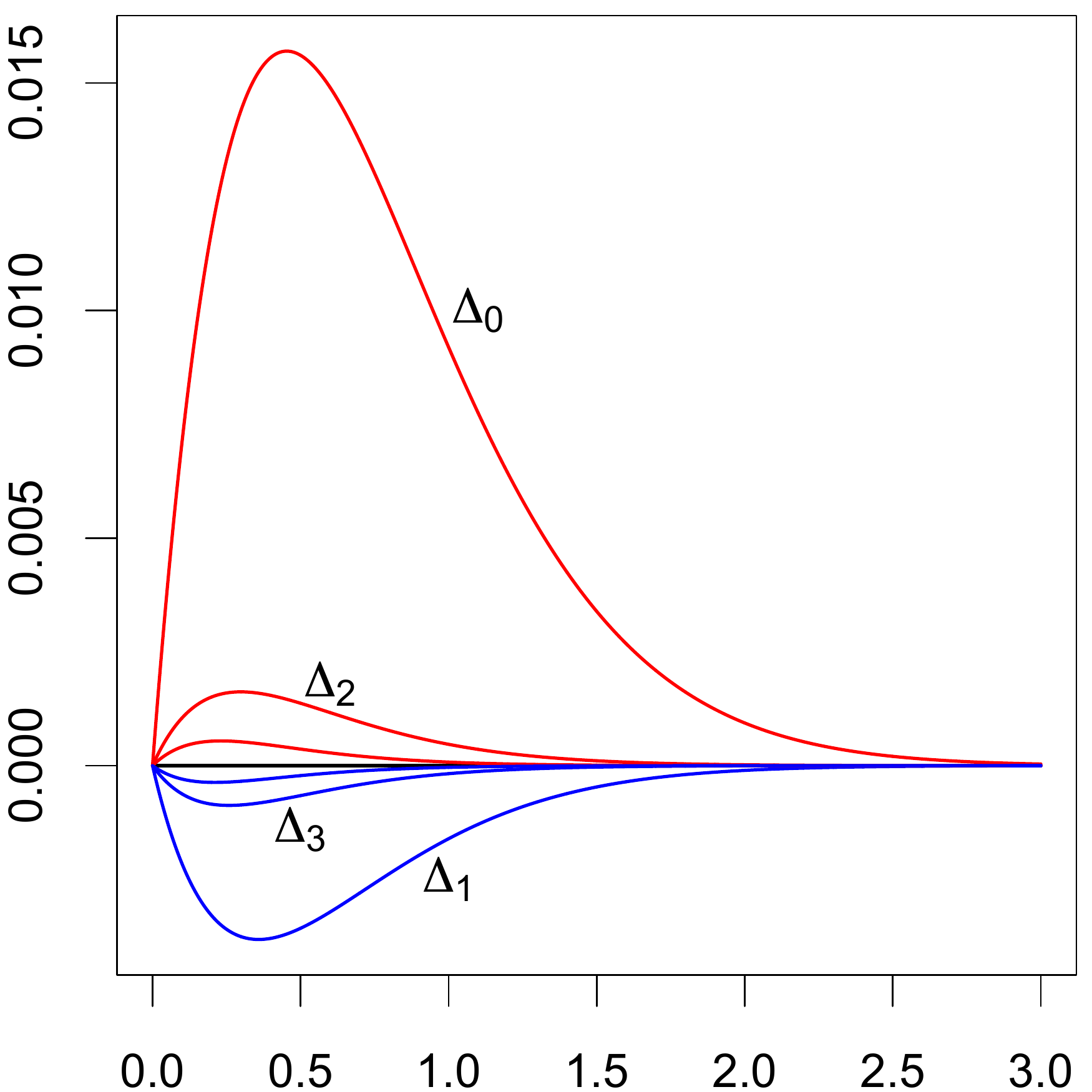}
\hfill
\includegraphics[width=0.49\textwidth]{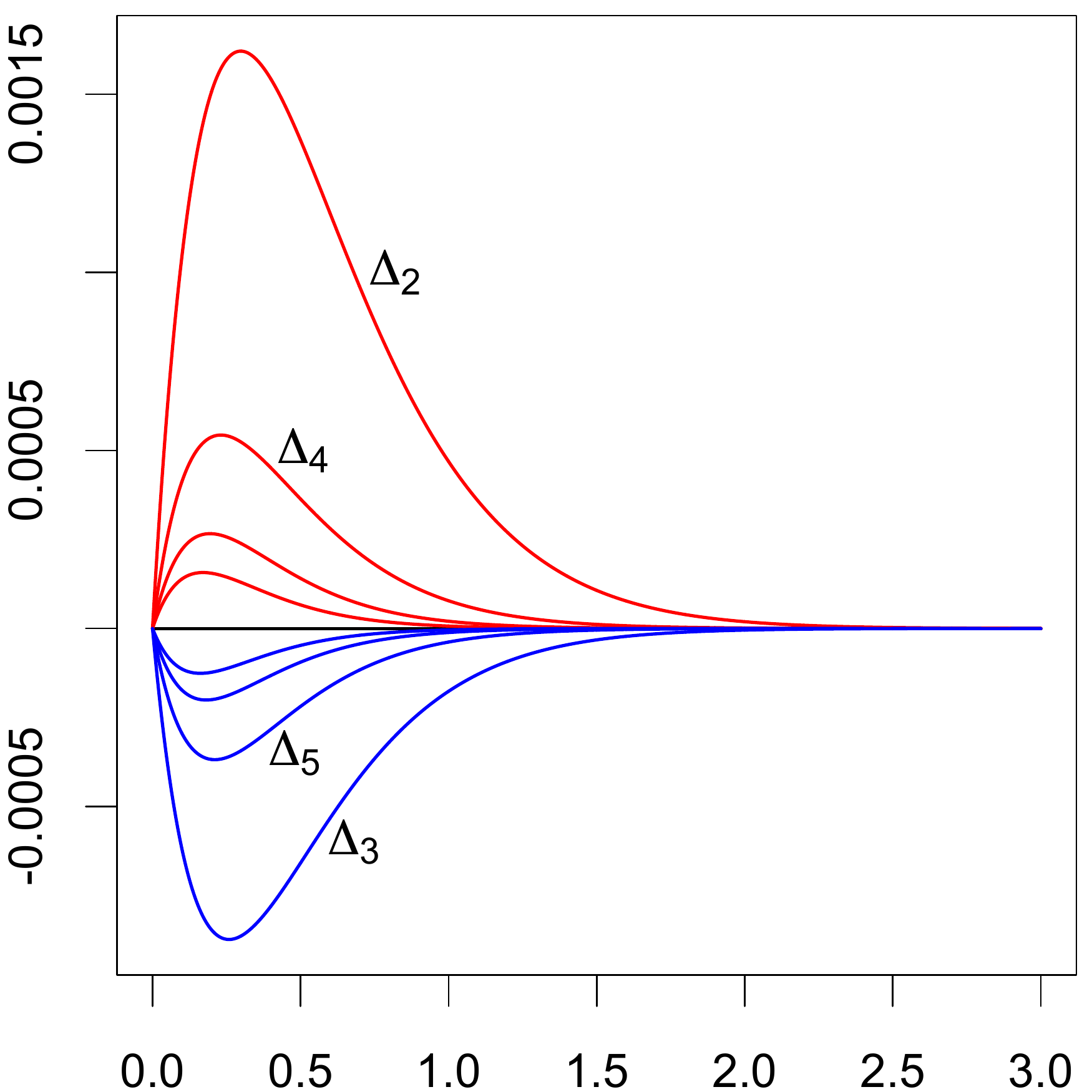}
\caption{Approximation errors $\Delta_k$ for $k=0,1,\ldots,5$ (left) and $k = 2, 3, \ldots, 9$ (right).}
\label{fig:B2}
\end{figure}

Before proving Theorem~\ref{dabigone} we have to prove Lemma~\ref{bounding ck}:

\begin{proof}[\bf Proof of Lemma~\ref{bounding ck}]
As we have just shown, $k + 1/2 < c_k^* < k+1$. Now we compare $c_k^*$ with $d_k := k + 1/2 + \bigl( 8(k + \gamma) \bigr)^{-1}$ for some fixed $\gamma > 0$. Note that
\begin{equation}
\label{ck vs dk 0}
	\lim_{k \to \infty} \, \frac{c_k^*}{k} \ = \ \lim_{k \to \infty} \, \frac{d_k}{k} \ = \ 1 .
\end{equation}
Suppose we can show that
\begin{equation}
\label{ck vs dk 1}
	\frac{c_k^*}{d_k} \ < \ \frac{c_{k+2}^*}{d_{k+2}}
	\quad\text{for all} \ k \ge 0 .
\end{equation}
Then, for arbitrary integers $k \ge 0$, the sequence $(c_{k+2\ell}/d_{k+2\ell})_{\ell=0}^\infty$ is strictly increasing with limit
$$
	\lim_{\ell \to \infty} \, \frac{c_{k+2\ell}^*}{d_{k+2\ell}}
	\ = \ \lim_{\ell \to \infty} \, \frac{c_{k+2\ell}^*/(k + 2\ell)}{d_{k+2\ell}/(k + 2\ell)}
	\ = \ 1 ,
$$
whence $c_k^* < d_k$. Analogously, if
\begin{equation}
\label{ck vs dk 2}
	\frac{c_k^*}{d_k} \ > \ \frac{c_{k+2}^*}{d_{k+2}}
	\quad\text{for all} \ k \ge 0 ,
\end{equation}
then $c_k^* > d_k$ for each $k \ge 0$.

Writing $x \sim y$ if $x = \tau y$ with $\tau > 0$, we may deduce from \eqref{ck recursively 2} that
\begin{eqnarray*}
	\frac{c_{k+2}^*}{d_{k+2}} - \frac{c_k^*}{d_k}
	& \sim & (k+2)^2 d_k - (k+1)^2 d_{k+2} \\
	& = & (k+2)^2 d_k - (k+1)^2 d_{k+2} \\
	& = & (k+2)^2(k + 1/2) - (k+1)^2 (k + 5/2)
		+ \frac{(k+2)^2}{8(k+\gamma)} - \frac{(k+1)^2}{8(k+2+\gamma)} \\
	& = & - \frac{1}{2} + \frac{(k+2)^2}{8(k+\gamma)} - \frac{(k+1)^2}{8(k+2+\gamma)} \\
	& \sim & (k+2)^2 (k+2+\gamma) - 4(k+\gamma)(k+2+\gamma) - (k+1)^2(k+\gamma) \\
	& = & (3 - 6\gamma) k + 8 - 5\gamma - 4\gamma^2 .
\end{eqnarray*}
In case of $\gamma = 1/2$, the previous expression equals $9/2$, so \eqref{ck vs dk 1} is satisfied.
In case of $\gamma = 1$ we get $- 3k - 1$, so \eqref{ck vs dk 2} holds true.

It remains to prove the refined upper bound $c_k^* \le d_k := k+1/2 + (8c_k^*)^{-1}$ for arbitrary $k \ge 0$. Since the latter constants satisfy \eqref{ck vs dk 0}, too, it suffices to verify \eqref{ck vs dk 1}. Tedious but elementary manipulations show that this is now equivalent to
$$
	c_k \ < \ k+1/2 + \frac{k + 7/4}{(k+2)^2} ,
$$
and the right hand side is easily shown to be larger than our preliminary bound $k+1/2 + \bigl( 8(k + 1/2) \bigr)^{-1}$ for $c_k^*$.
\end{proof}

\begin{proof}[\bf Proof of Theorem~\ref{dabigone}]
Our previous considerations show already that $1 - \Phi < \phi/h_k$ for even $k$ and $1 - \Phi > \phi/h_k$ for odd $k$ on $(0,\infty)$. To verify that $\phi/h_0 > \phi/h_2 > \phi/h_4 > \cdots$ and $\phi/h_1 < \phi/h_3 < \phi/h_5 < \cdots$, it suffices to show that for any integer $k \ge 0$ and $x > 0$,
$$
	\sqrt{c_k^* + (x/2)^2} + x/2
	\ = \ x + \frac{c_k^*}{\sqrt{c_k^* + (x/2)^2} + x/2}
$$
is strictly smaller than
\begin{equation}
\label{gkp1}
	x + \frac{k+1}{x + \displaystyle
		\frac{k+2}{\sqrt{c_{k+2}^* + (x/2)^2} + x/2}} .
\end{equation}
To this end, recall that $c_{k+2}^* = c_k^* / \rho_k^2$ with $\rho_k := (k+1)/(k+2) \in [1/2,1)$ by \eqref{ck recursively 2}. Consequently, \eqref{gkp1} equals
\begin{eqnarray*}
	\lefteqn{ x + \frac{k+1}{x + \displaystyle
		\frac{k+1}{\sqrt{c_k^* + (\rho_k x/2)^2} + \rho_k x/2}} } \\
	& = & x + \frac{(k+1)c_k^*}{c_k^* x + (k+1) \bigl( \sqrt{c_k^* + (\rho_k x/2)^2} - \rho_k x/2 \bigr)} \\
	& = & x + \frac{c_k^*}{\sqrt{c_k^* + (\rho_k x/2)^2} + (2 c_k^*/(k+1) - \rho_k) x/2} .
\end{eqnarray*}
Hence we have to show that
$$
	\sqrt{c_k^* + (\rho_k x/2)^2} + (2 c_k^*/(k+1) - \rho_k) x/2
	\ < \ \sqrt{c_k^* + (x/2)^2} + x/2 ,
$$
that means,
\begin{eqnarray*}
	(2 c_k^*/(k+1) - \rho_k - 1) x/2
	& < & \sqrt{c_k^* + (x/2)^2} - \sqrt{c_k^* + (\rho_k x/2)^2} \\
	& = & \frac{(1 - \rho_k^2) (x/2)^2}{\sqrt{c_k^* + (x/2)^2} + \sqrt{c_k^* + (\rho_k x/2)^2}} .
\end{eqnarray*}
Dividing both sides by $x/2$, one realizes that the previous inequality holds for all $x > 0$ if, and only if,
$2 c_k^*/(k+1) - \rho_k - 1 \le 0$, which is equivalent to
$$
	c_k^* \ \le \ \frac{(k+1)(k + 3/2)}{k+2}
	\ = \ k + 1/2 + \frac{1}{2k+4} .
$$
But this inequality is weaker than and thus a consequence of the upper bound in Lemma~\ref{bounding ck}.

Let $\tilde{\Delta}_k := \phi/\tilde{h}_k - (1 - \Phi)$, where $\tilde{h}_k$ is defined as $h_k$ with $k + 1/2$ in place of $c_k^*$, so $\sign(\tilde{\Delta}_k') = (-1)^{k+1}$. Thus $0 \le \Delta_k < \tilde{\Delta}_k$ with $\tilde{\Delta}_k$ strictly decreasing, if $k$ is even, while $0 \ge \Delta_k > \tilde{\Delta}_k$ with $\tilde{\Delta}_k$ strictly increasing, if $k$ is odd. In case of even $k$, $\max_{x \ge 0} \Delta_k(x)$ is strictly smaller than
$$
	\tilde{\Delta}_k(0)
	\ = \ \tilde{\Delta}_k(0) - \Delta_k(0)
		\ = \ \frac{\phi(0)}{h_k(0)} \Bigl( \frac{h_k(0)}{\tilde{h}_k(0)} - 1 \Bigr)
		\ = \ \frac{1}{2} \biggl( \sqrt{ \frac{c_k^*}{k + 1/2} } - 1 \biggr) \\
$$
while in case of odd $k$, $\max_{x \ge 0} |\Delta_k(x)|$ is strictly smaller than
\begin{eqnarray*}
	- \tilde{\Delta}_k(0)
	& = & \Delta_k(0) - \tilde{\Delta}_k(0)
		\ = \ \frac{\phi(0)}{h_k(0)} \Bigl( 1 - \frac{\tilde{h}_k(0)}{h_k(0)} \Bigr)
		\ = \ \frac{1}{2} \biggl( 1 - \sqrt{ \frac{k + 1/2}{c_k^*} } \biggr) \\
	& < & \frac{1}{2} \biggl( \sqrt{ \frac{c_k^*}{k + 1/2} } - 1 \biggr) .
\end{eqnarray*}
Since $c_k^* \le k + 1/2 + \bigl( 8(k + 1/2) \bigr)^{-1} = (k + 1/2) \Bigl( 1 + \bigl( 8 (k + 1/2)^2 \bigr)^{-1} \Bigr)$ by Lemma~\ref{bounding ck},
$$
	\frac{1}{2} \biggl( \sqrt{ \frac{c_k^*}{k + 1/2} } - 1 \biggr)
	\ < \ \frac{1}{4} \Bigl( \frac{c_k^*}{k + 1/2} - 1 \Bigr)
	\ \le \ \frac{1}{32 (k + 1/2)^2} .
$$\\[-8ex]
\end{proof}

\section{Rational Bounds}
\label{Rational Bounds}

It is also possible to obtain approximations $\phi/\wtilde{h}_k$ of $1 - \Phi$ with rational functions $\wtilde{h}_k : [0,\infty) \to (0,\infty)$. To this end, consider $h_k$ as in Lemma~\ref{lem:continued fractions} with the simpler function
$$
	g_k(x) \ := \ \sqrt{c_k^*} + \lambda_k x
$$
for some $\lambda_k \in [0,1]$ yet to be specified. Now
\begin{eqnarray*}
	\lefteqn{ g_k(x)^2 - x g_k(x) - k - g_k'(x) } \\
	& = & \sqrt{c_k^*}(2\lambda_k - 1) x - \lambda_k(1 - \lambda_k) x^2 + c_k^* - k - \lambda_k .
\end{eqnarray*}

Suppose first that $\lambda_k = 1$. Then the previous display equals $\sqrt{c_k^*} \, x - (k+1 - c_k^*)$, so
$$
	\sign \bigl( \Delta_k'(x) \bigr)
	\ = \ (-1)^k \sign \Bigl( x - \frac{k+1-c_k^*}{\sqrt{c_k^*}} \Bigr) .
$$
Hence for even $k$, $\phi/h_k$ is a lower bound $1 - \Phi$, whereas for odd $k$ it is an upper bound. Numerical experiments showed, however, that the bounds in Section~\ref{Square Roots} are better.

More interesting is the choice $\lambda_k := c_k^* - k \in (1/2,1)$. Then
$$
	g_k(x)^2 - x g_k(x) - k - g_k'(x)
	\ = \ x (a_k - b_k x)
$$
with $a_k := 2\sqrt{c_k^*}(c_k^* - k - 1/2)$ and $b_k := (c_k^* - k)(k + 1 - c_k^*)$. Thus $g_k$ satisfies the criterion in Lemma~\ref{lem:criterion} with $x_k$ equal to
\begin{equation}
\label{xtk}
	\wtilde{x}_k
	\ := \ \frac{2 \sqrt{c_k^*} (c_k^* - k - 1/2)}{(c_k^* - k)(k+1-c_k^*)}
	\ = \ \frac{2 \sqrt{c_k^*} (c_k^* - k - 1/2)}{1/4 - (c_k^* - k - 1/2)^2} .
\end{equation}
These considerations yield already the first part of our second main result:

\begin{Theorem}
\label{dabiggerone}
Let
$$
	\wtilde{h}_0(x) \ := \ \sqrt{2/\pi} + (2/\pi) x,
	\quad
	\wtilde{h}_1(x) \ := \ x + \frac{1}{\sqrt{\pi/2} + (\pi/2 - 1) x}
$$
and, for integers $k \ge 2$,
$$
	\wtilde{h}_k(x) \ := \
	x + \displaystyle\frac{1}{x + 
			\displaystyle\frac{2}{\begin{array}{c}\ddots\\\strut\end{array} x +
				\displaystyle\frac{k}{\sqrt{c_k^*} + (c_k^* - k) x}}} .
$$
Then the approximation errors $\wtilde{\Delta}_k := \phi/\wtilde{h}_k - (1 - \Phi)$ satisfy the following inequalities:
$$
	\wtilde{\Delta}_0 \ > \ \wtilde{\Delta}_2 \ > \ \wtilde{\Delta}_4 \ > \ \cdots \ > \ 0
	\quad\text{and}\quad
	\wtilde{\Delta}_1 \ < \ \wtilde{\Delta}_3 \ < \ \wtilde{\Delta}_5 \ < \ \cdots \ < \ 0
$$
on $(0,\infty)$. Moreover, the approximation errors $\wtilde{\Delta}_k$ and $\Delta_k$ (as in Theorem~\ref{dabigone}) satisfy the following inequalities:
$$
	\max_{x > 0} \, \bigl| \wtilde{\Delta}_k(x) \bigr|
	\ = \ \bigl| \wtilde{\Delta}_k(\wtilde{x}_k) \bigr|
	\ < \ \bigl| \Delta_k(x_k) \bigr|
	\ = \max_{x > 0} \, \bigl| \Delta_k(x) \bigr|
$$
with $x_k$ and $\wtilde{x}_k$ given by \eqref{xk} and \eqref{xtk}, respectively. But
$$
	\bigl| \wtilde{\Delta}_k(x) \bigr|
	\ > \ \bigl| \Delta_k(x) \bigr|
	\quad\text{for} \ x > \wtilde{x}_k ,
$$
and
$$
	2 x_k
	\ < \ \wtilde{x}_k
	\ < \ \min \Bigl( 1 , \, \frac{1}{\sqrt{c_k^*} \bigl( 1 - (4c_k^*)^{-2} \bigr)} \Bigr) .
$$
\end{Theorem}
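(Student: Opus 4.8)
\emph{Setup and overall strategy.} Throughout the proof I keep the rational terminal function $g_k(x) = \sqrt{c_k^*} + (c_k^*-k)x$ introduced before the theorem, and I write $g_k^\circ(x) := \sqrt{c_k^* + (x/2)^2} + x/2$ for the square-root terminal used in Theorem~\ref{dabigone}. It is convenient to abbreviate $\epsilon_k := c_k^* - k - 1/2 \in (0,1/2)$, so that $a_k = 2\sqrt{c_k^*}\,\epsilon_k$ and $b_k = (c_k^*-k)(k+1-c_k^*) = 1/4 - \epsilon_k^2$, whence \eqref{xk} and \eqref{xtk} read $x_k = a_k/\sqrt{b_k}$ and $\wtilde x_k = a_k/b_k$. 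The computation preceding the theorem already shows that $g_k$ meets the criterion of Lemma~\ref{lem:criterion} with change-point $\wtilde x_k$; hence $\wtilde\Delta_k$ has constant sign $(-1)^k$, its derivative changes sign exactly once, and $\max_{x>0}|\wtilde\Delta_k(x)| = |\wtilde\Delta_k(\wtilde x_k)|$, precisely as in Theorem~\ref{dabigone}. The plan is to mirror the proof of Theorem~\ref{dabigone} for the ordering, and then to exploit one clean crossing identity to compare $\wtilde\Delta_k$ with $\Delta_k$ and to bound $\wtilde x_k$.

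\emph{Ordering of the $\wtilde\Delta_k$.} Exactly as in Theorem~\ref{dabigone}, passing from the depth-$k$ to the depth-$(k+2)$ continued fraction replaces the terminal $g_k$ by $G_{k+2}(x) := x + (k+1)/\bigl(x + (k+2)/g_{k+2}(x)\bigr)$, while the first $k$ layers form a composition of $k$ decreasing maps $u \mapsto x + j/u$, hence a map of monotonicity sign $(-1)^k$. Both parities therefore collapse to the single pointwise inequality $g_k(x) < G_{k+2}(x)$ for $x>0$. Inserting $c_{k+2}^* = c_k^*/\rho_k^2$ with $\rho_k = (k+1)/(k+2)$ from \eqref{ck recursively 2} and clearing the positive denominator $x g_{k+2}(x)+k+2$ turns this into a cubic inequality $P(x)>0$ with $P(0)=0$ (since $g_k(0)=G_{k+2}(0)=\sqrt{c_k^*}$) and, because $g_k(x)<x<G_{k+2}(x)$ for large $x$, positive leading coefficient; factoring out $x$ leaves a quadratic whose positivity on $(0,\infty)$ is to be checked from Lemma~\ref{bounding ck}. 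I expect this last verification to be the main obstacle: unlike in Theorem~\ref{dabigone}, the rational terminals do \emph{not} obey the scaling relation $g_{k+2}(x)=\rho_k^{-1}g_k(\rho_k x)$, so the argument no longer reduces to a one-line condition on $c_k^*$ and must be pushed through by hand.

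\emph{Comparison with the square-root error $\Delta_k$.} The key is that $g_k$ and $g_k^\circ$ cross exactly at $\wtilde x_k$. Indeed, solving $\sqrt{c_k^*+(x/2)^2} = \sqrt{c_k^*}+\epsilon_k x$ and discarding the root $x=0$ gives the unique positive solution $x = 2\sqrt{c_k^*}\,\epsilon_k/(1/4-\epsilon_k^2) = \wtilde x_k$. Since both terminals equal $\sqrt{c_k^*}$ at $0$ while $g_k'(0)=c_k^*-k>1/2=(g_k^\circ)'(0)$, we get $\sign\bigl(g_k^\circ(x)-g_k(x)\bigr)=\sign(x-\wtilde x_k)$. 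Applying the depth-$k$ map (sign $(-1)^k$) yields $\sign\bigl(\wtilde\Delta_k(x)-\Delta_k(x)\bigr) = \sign\bigl(h_k(x)-\wtilde h_k(x)\bigr) = (-1)^k\sign(x-\wtilde x_k)$, where $h_k$ is the function of Theorem~\ref{dabigone}. For $x>\wtilde x_k$ this places $\wtilde\Delta_k$ farther from $0$ than $\Delta_k$ in both parities, giving $|\wtilde\Delta_k(x)|>|\Delta_k(x)|$; at $x=\wtilde x_k$ the terminals coincide, so $\wtilde\Delta_k(\wtilde x_k)=\Delta_k(\wtilde x_k)$. Because $\Delta_k$ attains its extreme value at $x_k$ and is strictly monotone on each side of $x_k$ (Theorem~\ref{dabigone}), and since $\wtilde x_k>x_k$ (see below), we conclude $|\wtilde\Delta_k(\wtilde x_k)| = |\Delta_k(\wtilde x_k)| < |\Delta_k(x_k)|$, which is the asserted strict comparison of maxima.

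\emph{Bounds on $\wtilde x_k$.} From $x_k=a_k/\sqrt{b_k}$ and $\wtilde x_k=a_k/b_k$ one has $\wtilde x_k/x_k = 1/\sqrt{b_k}$; since $c_k^*>k+1/2$ forces $b_k = 1/4-\epsilon_k^2 < 1/4$, this ratio exceeds $2$, giving $\wtilde x_k > 2x_k$. For the upper bound I insert the estimate $\epsilon_k < 1/(8c_k^*)$ from Lemma~\ref{bounding ck} into $\wtilde x_k = 2\sqrt{c_k^*}\,\epsilon_k/(1/4-\epsilon_k^2)$, bounding the numerator above by $1/(4\sqrt{c_k^*})$ and the denominator below by $\tfrac14\bigl(1-(4c_k^*)^{-2}\bigr)$, which delivers $\wtilde x_k < 1/\bigl(\sqrt{c_k^*}\,(1-(4c_k^*)^{-2})\bigr)$. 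For $k\ge1$ one has $c_k^*>3/2$, so this last bound already lies below $1$ and both minimands are covered; the single remaining case $\wtilde x_0 < 1$ (with $c_0^*=2/\pi$) is settled by a direct numerical evaluation of \eqref{xtk}.
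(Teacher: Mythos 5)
Your comparison of $\wtilde{\Delta}_k$ with $\Delta_k$ and your bounds on $\wtilde{x}_k$ are correct and essentially identical to the paper's own arguments: the crossing identity $\sign\bigl(g_k^\circ(x)-g_k(x)\bigr)=\sign(x-\wtilde{x}_k)$, its propagation through the depth-$k$ fraction with monotonicity $(-1)^k$, the ratio $x_k/\wtilde{x}_k=\sqrt{1/4-\epsilon_k^2}<1/2$, and the insertion of $\epsilon_k<(8c_k^*)^{-1}$ into \eqref{xtk} are all exactly what the paper does, down to the separate numerical check of $\wtilde{x}_0<1$. The genuine gap lies in the first assertion of the theorem. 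You reduce the ordering of the $\wtilde{\Delta}_k$, correctly, to the pointwise inequality $g_k(x) < x + (k+1)/\bigl(x+(k+2)/g_{k+2}(x)\bigr)$ and then to the positivity on $(0,\infty)$ of a cubic vanishing at $0$ --- and there you stop, declaring the verification ``the main obstacle'' that ``must be pushed through by hand.'' That verification is the computational heart of the claim and is not routine: both sides agree at $x=0$, so everything hinges on the low-order coefficients, and a priori the coefficient of $x$ could be negative, in which case the inequality would fail near $0$. Announcing that the check remains to be done is not a proof of it.

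Here is how the paper closes it. With $\alpha_k := (k+1-c_k^*)/\sqrt{c_k^*}$, $\beta_k := \sqrt{c_k^*}/(k+1)$ and $\gamma_k := (c_{k+2}^*-k-2)/\sqrt{c_{k+2}^*}$, the difference to be shown positive equals $\bigl(\alpha_k\beta_k\gamma_k x^3 + (\alpha_k-\gamma_k)\beta_k x^2 + (\alpha_k+\gamma_k-\beta_k)x\bigr)\big/\bigl(\beta_k\gamma_k x^2+\beta_k x+1\bigr)$, and two facts do the work. First, the exact identity $\alpha_k+\gamma_k=\beta_k$, a consequence of $c_{k+2}^*=\bigl((k+2)/(k+1)\bigr)^2 c_k^*$ from \eqref{ck recursively 2}, makes the dangerous linear coefficient vanish identically --- this is precisely the constant term of your quadratic, and its vanishing is the ``miracle'' that lets the by-hand computation succeed. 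Second, $\alpha_k>\gamma_k$, which after simplification reduces to $c_k^* < k+1/2+(4k+6)^{-1}$: for $k\ge 1$ this follows from the upper bound of Lemma~\ref{bounding ck} because $4k+6<8(k+1/2)$, while $k=0$ must be handled separately (there it amounts to $2/\pi<2/3$), since Lemma~\ref{bounding ck} alone does not suffice in that case. Neither the vanishing constant term nor the exceptional case $k=0$ is visible in your sketch, so until you supply these two verifications the ordering claim is unproved. (A small further point: your conclusion that the upper bound on $\wtilde{x}_k$ lies below $1$ for all $k\ge 1$ tacitly uses that $c\mapsto \sqrt{c}\,\bigl(1-(4c)^{-2}\bigr)$ is increasing; one line, but it should be said.)
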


The approximation errors $\wtilde{\Delta}_k$ are depicted in Figure~\ref{fig:B3}. They look similar to the errors $\Delta_k$ in Section~\ref{Square Roots}. For a direct comparison, some error functions $\Delta_k$ and $\wtilde{\Delta}_k$ are displayed simultaneously in Figure~\ref{fig:B4}. One sees clearly that $|\Delta_k| \ge |\wtilde{\Delta}_k|$ on $[0,\wtilde{x}_k]$, while $|\Delta_k| < |\wtilde{\Delta}_k|$ on $(\wtilde{x}_k,\infty)$.

\begin{figure}[h]
\includegraphics[width=0.49\textwidth]{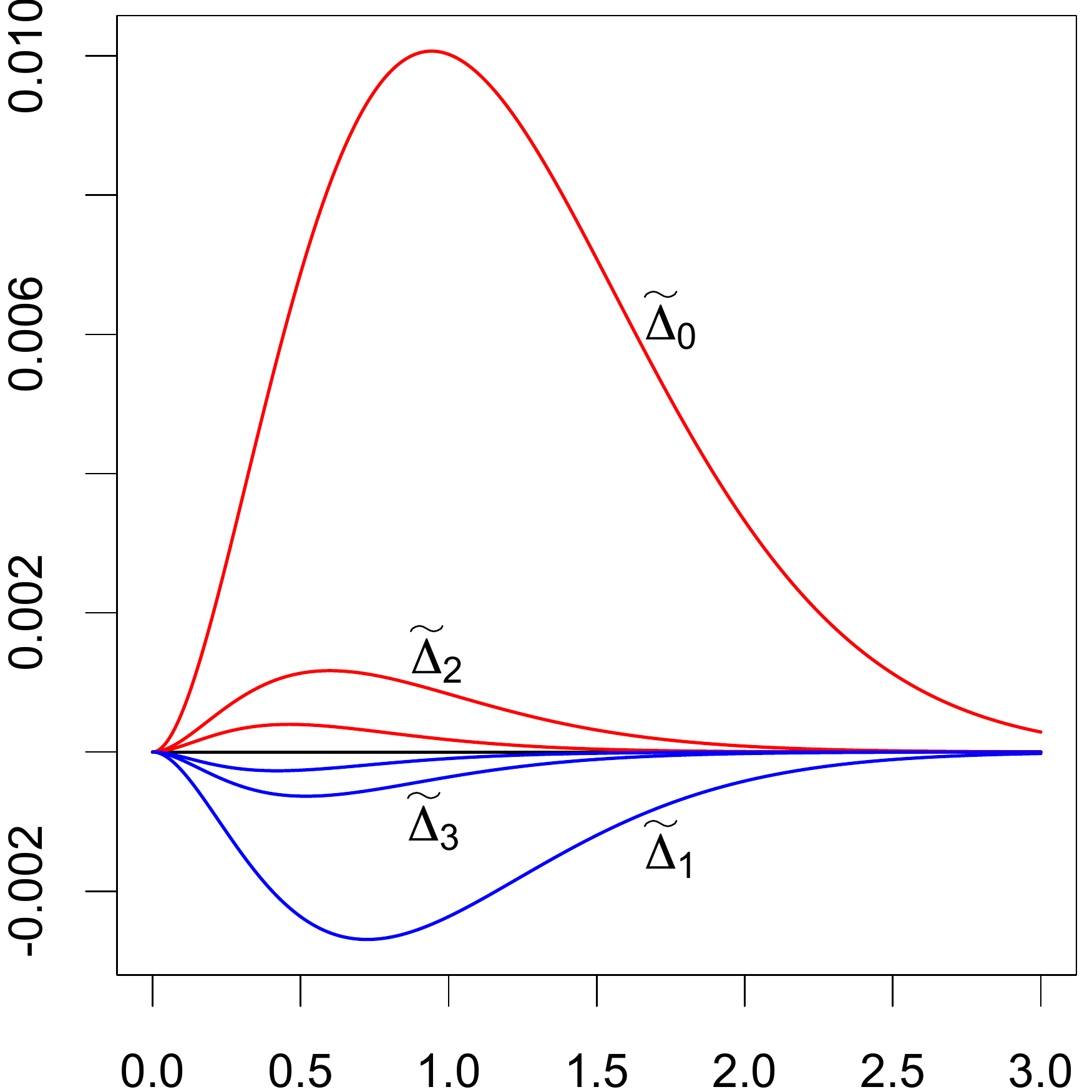}
\hfill
\includegraphics[width=0.49\textwidth]{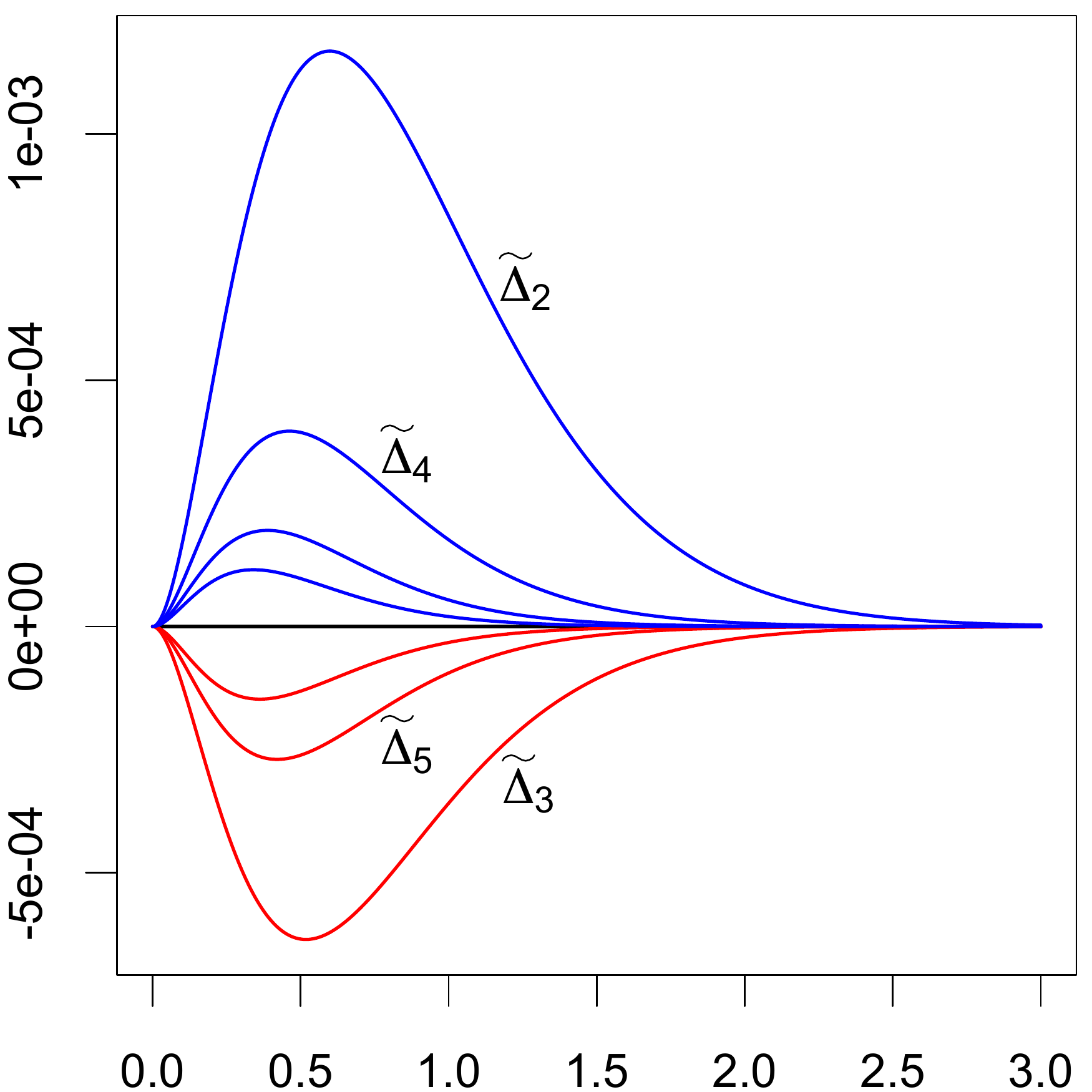}
\caption{Approximation errors $\wtilde{\Delta}_k$ for $k=0,1,\ldots,5$ (left) and $k = 2, 3, \ldots, 9$ (right).}
\label{fig:B3}
\end{figure}

\begin{figure}[h]
\centering
\includegraphics[width=0.7\textwidth]{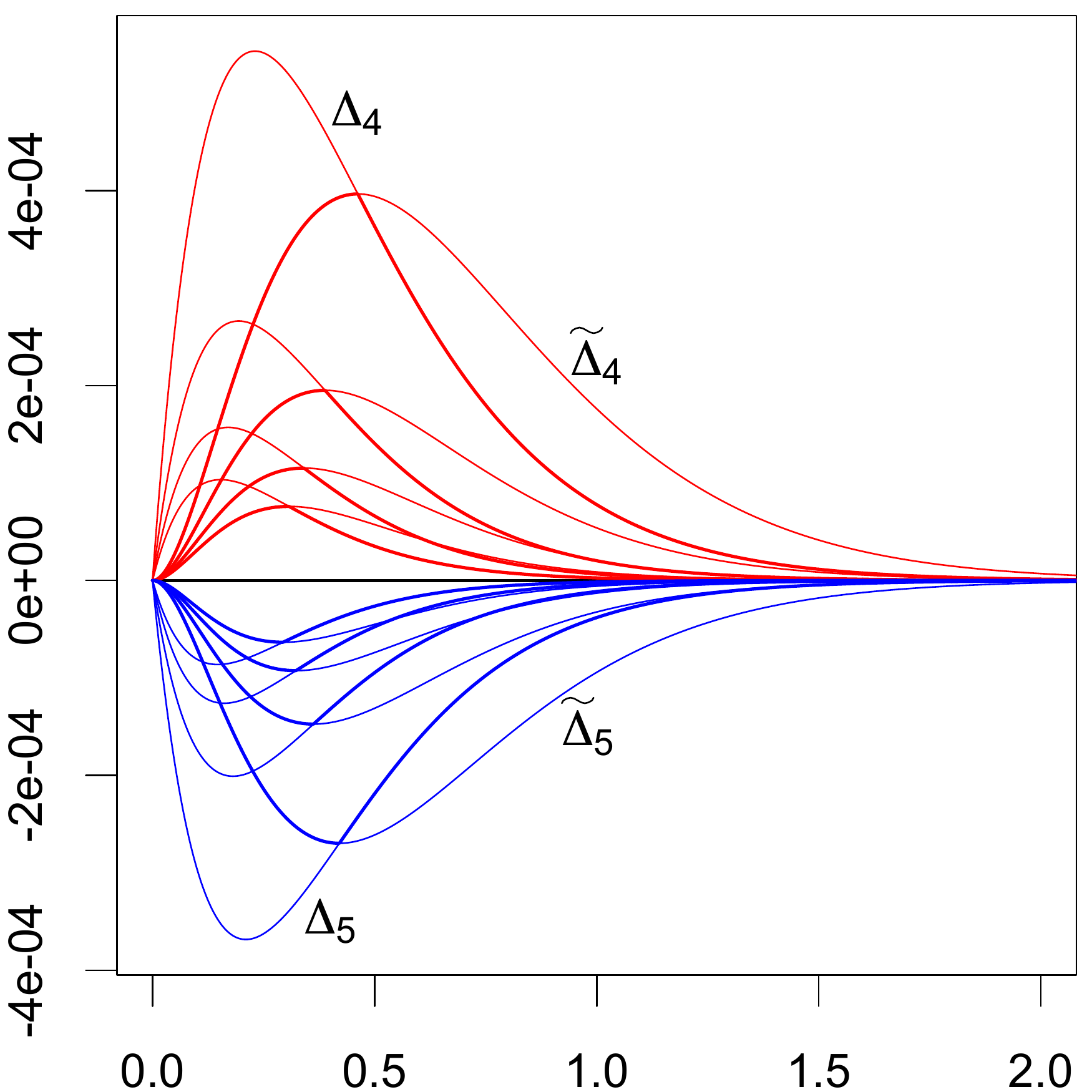}
\caption{Approximation errors $\Delta_k$ and $\wtilde{\Delta}_k$ for $k=4,5,\ldots,11$.}
\label{fig:B4}
\end{figure}

\begin{proof}[\bf Proof of Theorem~\ref{dabiggerone}]
Our previous considerations show already that $\wtilde{\Delta}_k > 0$ for even $k$ and $\wtilde{\Delta}_k < 0$ for odd $k$ on $(0,\infty)$. To verify that $\wtilde{\Delta}_k > \wtilde{\Delta}_{k+2}$ for even $k$ and $\wtilde{\Delta}_k < \wtilde{\Delta}_{k+2}$ for odd $k$, we have show that for any integer $k \ge 0$ and $x > 0$,
\begin{equation}
\label{gtilde}
	\sqrt{c_k^*} + (c_k - k) x
	\ < \ x + \frac{k+1}{x + \displaystyle
		\frac{k+2}{\sqrt{c_{k+2}^*} + (c_{k+2}^* - k - 2) x}} .
\end{equation}
But this is equivalent to
\begin{eqnarray}
\nonumber
	0
	& < & \frac{k+1-c_k^*}{\sqrt{c_k^*}} \, x
		+ \frac{k+1}{\sqrt{c_k^*} \, x + \displaystyle
			\frac{(k+2)\sqrt{c_k^*}}{\sqrt{c_{k+2}^*} + (c_{k+2}^* - k - 2) x}}
		\ - \, 1 \\
\label{aha}
	& = & \frac{k+1-c_k^*}{\sqrt{c_k^*}} \, x
		+ \frac{1}{\displaystyle \frac{\sqrt{c_k^*}}{k+1} \, x +
			\frac{\sqrt{c_{k+2}^*}}{\sqrt{c_{k+2}^*} + (c_{k+2}^* - k - 2) x}}
		\ - \, 1
\end{eqnarray}
according to \eqref{ck recursively 2}. With
$$
	\alpha_k \ := \ \frac{k+1-c_k^*}{\sqrt{c_k^*}} ,	\quad
	\beta_k  \ := \ \frac{\sqrt{c_k^*}}{k+1}	\quad\text{and}\quad
	\gamma_k \ := \ \frac{c_{k+2}^* - k - 2}{\sqrt{c_{k+2}^*}} ,
$$
we may rewrite \eqref{aha} as
\begin{eqnarray*}
	\alpha_k \, x
		+ \frac{1}{\beta_k x + \displaystyle
			\frac{1}{\gamma_k x + 1}}
		\ - \, 1
	& = & \alpha_k \, x
		+ \frac{\gamma_k x + 1}{\beta_k \gamma_k x^2 + \beta_k x + 1}
		\ - \, 1 \\
	& = & \frac{\alpha_k \beta_k \gamma_k x^3
			+ (\alpha_k - \gamma_k)\beta_k x^2
			+ (\alpha_k + \gamma_k - \beta_k)x}
		{\beta_k \gamma_k x^2 + \beta_k x + 1} .
\end{eqnarray*}
But it follows from $k+1/2 < c_k^* < k+1$ and $c_{k+2}^* > k+5/2$ that $\alpha_k, \beta_k, \gamma_k > 0$, and
\begin{eqnarray*}
	\alpha_k - \gamma_k
	& = & \frac{k+1 - c_k^*}{\sqrt{c_k^*}}
		- \frac{(k+2)^2 (k+1)^{-2} c_k^* - k - 2}{(k+2)(k+1)^{-1} \sqrt{c_k^*}} \\
	& \sim & k+1 - c_k^* - (k+2)(k+1)^{-1} c_k^* + k+1 \\
	& = & 2k+2 - (2k+3)(k+1)^{-1} c_k^* \\
	& \sim & \frac{(k+1)^2}{k+3/2} - c_k^* \\
	& = & k + 1/2 + \frac{1}{4k + 6} - c_k^* .
\end{eqnarray*}
For $k \ge 1$, $4k + 6 < 8k + 4 = 8(k+1/2)$, so $\alpha_k - \gamma_k > 0$ by Lemma~\ref{bounding ck}, while $\alpha_0 - \gamma_0 \sim 2/3 - 2/\pi > 0$. Moreover,
\begin{eqnarray*}
	\alpha_k + \gamma_k - \beta_k
	& = & \frac{k+1 - c_k^* + (k+2)(k+1)^{-1} c_k^* - (k+1)}{\sqrt{c_k^*}} - \beta_k \\
	& = & \frac{\sqrt{c_k^*}}{k+1} - \beta_k
		\ = \ 0 .
\end{eqnarray*}
Thus we have verified \eqref{gtilde}.

Concerning the comparison of $\wtilde{\Delta}_k$ and $\Delta_k$, note that
$$
	\sqrt{c_k^*} + (c_k - k)x \ \ge \ \sqrt{c_k^* + (x/2)^2} + x/2
$$
if, and only if,
$$
	\sqrt{c_k^*} + (c_k^* - k - 1/2) x \ \ge \ \sqrt{c_k^* + x^2/4} .
$$
For $x \ge 0$, the latter inequality is equivalent to
$$
	x
	\ \le \ \frac{2 \sqrt{c_k^*} (c_k^* - k - 1/2)}{1/4 - (c_k^* - k - 1/2)^2}
	\ = \ \frac{2 \sqrt{c_k^*}}{(c_k^* - k)(k + 1 - c_k^*)} \ = \ \wtilde{x}_k ,
$$
and we know already that $\wtilde{x}_k$ is the unique maximizer of $\bigl| \wtilde{\Delta}_k \bigr|$. On the other hand, we also know that the unique maximizer of $\bigl| \Delta_k \bigr|$ is given by \eqref{xk}, and
$$
	\frac{x_k}{\wtilde{x}_k}
	\ = \ \sqrt{1/4 - (c_k^* - k - 1/2)^2}
	\ < \ 1/2 .
$$
This shows that
$$
	\max_{x > 0} \, \bigl| \Delta_k(x) \bigr|
	\ = \ \bigl| \Delta_k(x_k) \bigr|
	\ > \ \bigl| \Delta_k(\wtilde{x}_k) \bigr|
	\ = \ \bigl| \wtilde{\Delta}_k(\wtilde{x}_k) \bigr|
	\ = \ \max_{x > 0} \, \bigl| \wtilde{\Delta}_k(x) \bigr| .
$$

It remains to show that $\wtilde{x}_k$ is smaller than one and smaller than the reciprocal of $\sqrt{c_k^*} \bigl( 1 - (4c_k^*)^{-2} \bigr)$. On the one hand, $\wtilde{x}_0 = \sqrt{\pi/2}(4 - \pi)/(\pi - 2) < 1$. On the other hand, it follows from Lemma~\ref{bounding ck} that $c_k^* < k+1/2 + (8c_k^*)^{-1}$, so
$$
	\wtilde{x}_k
	\ < \ \frac{2 \sqrt{c_k^*}(8 c_k^*)^{-1}}{1/4 - (8c_k^*)^{-2}}
	\ = \ \frac{1}{\sqrt{c_k^*} \bigl( 1 - (4c_k^*)^{-2} \bigr)} .
$$
Note also that the latter bound is strictly decreasing in $c_k^* > 1/2$. Since $c_1^* > 3/2$, it is strictly smaller than $36/35 \sqrt{2/3} < 0.82$ for all $k \ge 1$.
\end{proof}

\section{New Bounds Involving Exponentials}
\label{Exponential}

As a final type of approximation, consider $h_k$ as in Lemma~\ref{lem:continued fractions} with
$$
	g_k(x) \ := \ x + \sqrt{c_k^*} \exp(- \delta_k x)
$$
for some $\delta_k > 0$ to be specified later. Here
\begin{eqnarray*}
	\lefteqn{ g_k(x)^2 - x g_k(x) - k - g_k'(x) } \\
	& = & \sqrt{c_k^*} \exp(- \delta_k x) \bigl( x + \sqrt{c_k^*} \exp(- \delta_k x) \bigr)
		- (k+1) + \delta_k \sqrt{c_k^*} \exp(- \delta_k x) \\
	& \sim & \delta_k + x + \sqrt{c_k^*} \exp(- \delta_k x) - \frac{k+1}{\sqrt{c_k^*}} \exp( \delta_k x) \\
	& =: & f_k(x) .
\end{eqnarray*}
Note that
\begin{eqnarray*}
	f_k(0)
	& = & \delta_k - \frac{k+1 - c_k^*}{\sqrt{c_k^*}} , \\
	f_k'(x)
	& = & 1 - \delta_k \Bigl( \sqrt{c_k^*} \exp(- \delta_k x)
			+ \frac{k+1}{\sqrt{c_k^*}} \exp(\delta_k x) \Bigr) , \\
	f_k''(x)
	& = & \delta_k^2 \Bigl( \sqrt{c_k^*} \exp(- \delta_k x)
			- \frac{k+1}{\sqrt{c_k^*}} \exp(\delta_k x) \Bigr) \\
	& \le & \frac{\delta_k^2}{\sqrt{c_k^*}} \bigl( c_k^* - k - 1 \bigr) \ < \ 0 .
\end{eqnarray*}
Thus $f_k$ is strictly concave on $[0,\infty)$ with $\lim_{x \to \infty} f_k(x) = - \infty$. Setting
\begin{equation}
\label{deltak}
	\delta_k \ := \ \frac{k+1 - c_k^*}{\sqrt{c_k^*}}
	\ = \ \sqrt{c_{k+1}^*} - \sqrt{c_k^*}
\end{equation}
leads to $f_k(0) = 0$ and, by Lemma~\ref{bounding ck},
\begin{eqnarray*}
	f_k'(0)
	& = & 1 - \frac{(k+1 - c_k^*)(k+1 + c_k^*)}{c_k^*} \\
	& = & 1 + c_k^* - \frac{(k+1)^2}{c_k^*} \\
	& > & k + 3/2 + (8(k+1))^{-1} - \frac{(k+1)^2}{k+1/2 + (8(k+1))^{-1}} \\
	& = & \frac{1}{64 (k+1)^2 (k+1/2) + 8(k+1)} \ > \ 0 .
\end{eqnarray*}
Hence, with this choice of $\delta_k$, the function $g_k$ satisfies the conditions of Lemma~\ref{lem:criterion}. Note also that
$$
	\bigl( x + \sqrt{c_k^*}\exp \bigl( - \delta_k x) \bigr) - \bigl( \sqrt{c_k^*} + (c_k^* - k) x \bigr)
	\ = \ \sqrt{c_k^*} \bigl( \delta_k x + \exp(- \delta_k x) - 1 \bigr)
	\ > \ 0 .
$$
Hence the resulting bounds for $1 - \Phi$ are strictly better than the ones in Section~\ref{Rational Bounds}.

\begin{Theorem}
\label{dalastone}
Let
$$
	\what{h}_0(x)
	\ := \ x + \sqrt{2/\pi} \exp( - \delta_0 x) ,
	\quad
	\what{h}_1(x) \ := \ x + \frac{1}{x + \sqrt{\pi/2} \exp( - \delta_1 x)}
$$
and, for integers $k \ge 2$,
$$
	\what{h}_k(x) \ := \
	x + \displaystyle\frac{1}{x + 
			\displaystyle\frac{2}{\begin{array}{c}\ddots\\\strut\end{array} x +
				\displaystyle\frac{k}{x + \sqrt{c_k^*} \exp(- \delta_k x)}}} ,
$$
where $\delta_k := \sqrt{c_{k+1}^*} - \sqrt{c_k^*}$. Then the approximation errors $\what{\Delta}_k := \phi/\what{h}_k - (1 - \Phi)$ and $\wtilde{\Delta}_k$ (as in Theorem~\ref{dabiggerone}) satisfy the following inequalities on $(0,\infty)$:
$$
	\what{\Delta}_0 \ > \ \what{\Delta}_2 \ > \ \what{\Delta}_4 \ > \ \cdots \ > \ 0
	\quad\text{and}\quad
	\what{\Delta}_1 \ < \ \what{\Delta}_3 \ < \ \what{\Delta}_5 \ < \ \cdots \ < \ 0 .
$$
Moreover,
$$
	\bigl| \what{\Delta}_k \bigr| \ < \ \bigl| \wtilde{\Delta}_k \bigr| .
$$
\end{Theorem}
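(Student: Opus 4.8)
The plan is to mimic the proofs of Theorems~\ref{dabigone} and~\ref{dabiggerone} and reduce every assertion to a pointwise comparison of the bottom functions $g_k(x) = x + \sqrt{c_k^*}\exp(-\delta_k x)$. The two sign statements, $\what\Delta_k > 0$ for even $k$ and $\what\Delta_k < 0$ for odd $k$, are already contained in the computation preceding the theorem, which verifies that $g_k$ satisfies the hypotheses of Lemma~\ref{lem:criterion}. The only additional tool I need, used implicitly in the earlier proofs, is the elementary monotonicity of the $k$-fold continued fraction $H_k(g)$ of Lemma~\ref{lem:continued fractions} in its bottom entry $g > 0$: it is strictly increasing in $g$ for even $k$ and strictly decreasing for odd $k$, since passing from one level to the next inverts monotonicity. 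Writing $\what h_k = H_k(g_k)$ and observing that $\what h_{k+2} = H_k(\what g_k)$ with $\what g_k(x) := x + (k+1)/(x + (k+2)/g_{k+2}(x))$, both remaining claims turn into comparisons between $g_k$ and a second explicit function, to which $H_k$-monotonicity is then applied.

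The comparison $|\what\Delta_k| < |\wtilde\Delta_k|$ is then immediate. The display just before the theorem already shows $g_k(x) > \sqrt{c_k^*} + (c_k^* - k)x = \wtilde g_k(x)$ for $x > 0$, where $\wtilde g_k$ is the bottom function of Theorem~\ref{dabiggerone}; note that $\wtilde g_k(x) = x + \sqrt{c_k^*}(1 - \delta_k x)$ is precisely the linearization of $g_k(x) = x + \sqrt{c_k^*}\exp(-\delta_k x)$, so the inequality is just $\exp(-u) > 1 - u$. Applying $H_k$-monotonicity to $g_k > \wtilde g_k$ orders $\what h_k$ and $\wtilde h_k$ according to the parity of $k$, and since $\what\Delta_k$ and $\wtilde\Delta_k$ carry the same sign $(-1)^k$, a one-line case distinction converts this into $|\what\Delta_k| < |\wtilde\Delta_k|$ on $(0,\infty)$.

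The monotone orderings $\what\Delta_0 > \what\Delta_2 > \cdots > 0$ and $\what\Delta_1 < \what\Delta_3 < \cdots < 0$ require, by the reduction above, the pointwise inequality $g_k(x) < \what g_k(x)$ for all $x > 0$ and $k \ge 0$. Setting $y_j := \sqrt{c_j^*}\exp(-\delta_j x)$, clearing the two nested denominators (legitimate once one checks $k+1 - x y_k > 0$, a consequence of $x\exp(-\delta_k x) \le 1/(\mathrm{e}\,\delta_k)$ and Lemma~\ref{bounding ck}) and using the identity $(k+1)\sqrt{c_{k+2}^*} = (k+2)\sqrt{c_k^*}$ from \eqref{ck recursively 2}, this inequality becomes
$$
	(k+1)x + (k+2)\sqrt{c_k^*}\bigl(\exp(-\delta_{k+2}x) - \exp(-\delta_k x)\bigr)
	\ > \ \sqrt{c_k^*}\, x^2 \exp(-\delta_k x) + \sqrt{c_k^* c_{k+2}^*}\, x \exp\bigl(-(\delta_k+\delta_{k+2})x\bigr) .
$$
On the left the bracketed term is positive because the gaps $\delta_k = \sqrt{c_{k+1}^*} - \sqrt{c_k^*}$ are decreasing in $k$, so $\delta_{k+2} < \delta_k$; I would derive this from the concavity of $k \mapsto \sqrt{c_k^*}$ implied by Lemma~\ref{bounding ck}.

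The hard part is this last inequality, and it is genuinely delicate: as $x \to 0$ the two sides agree to first order and, for large $k$, the $O(x)$ coefficients very nearly cancel, so no crude estimate can succeed. I would attack it by bootstrapping from the rational case: since $\wtilde g_k$ is the linearization of $g_k$, the analogous two-step inequality $\wtilde g_k < \wtilde{\what g}_k$ was already established in the proof of Theorem~\ref{dabiggerone}, and the difference between the exponential and rational versions is a sum of convex corrections $\sqrt{c_j^*}\bigl(\exp(-\delta_j x) - 1 + \delta_j x\bigr) \ge 0$. The task is to show that the net correction has the right sign for every $x > 0$; this is where the sharp two-sided bounds $1/(8(k+1)) < c_k^* - k - 1/2 < 1/(8c_k^*)$ of Lemma~\ref{bounding ck}, together with second-order Taylor control of $\exp(-\delta_j x)$ and the ordering of the $\delta_j$, must be combined. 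I expect this estimate, rather than any of the structural reductions, to be the main obstacle.
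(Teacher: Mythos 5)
Your structural reductions coincide exactly with the paper's: the two sign statements and the comparison $\bigl|\what{\Delta}_k\bigr| < \bigl|\wtilde{\Delta}_k\bigr|$ are indeed settled by the computation preceding the theorem together with $\exp(-u) > 1 - u$ and the alternating monotonicity of the continued fraction in its bottom entry, and the orderings of $\what{\Delta}_k$ and $\what{\Delta}_{k+2}$ do reduce to the pointwise inequality $g_k(x) < \what{g}_k(x)$. Your cleared-denominator form of that inequality is algebraically correct (incidentally, your side condition $k+1 - x y_k > 0$ is unnecessary: the only denominators involved, $g_{k+2}(x)$ and $x g_{k+2}(x) + k + 2$, are manifestly positive). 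But the proposal stops precisely where the theorem's content lies. You state the key inequality, correctly diagnose that it is delicate because both sides agree to first order at $x = 0$, and then only sketch a hoped-for ``bootstrap from the rational case'' while conceding you do not know how to carry it out. That is a genuine gap, not a routine verification: the rational-case inequality from Theorem~\ref{dabiggerone} does not transfer by adding convex corrections of a definite sign, since the terms $\sqrt{c_j^*}\bigl(\exp(-\delta_j x) - 1 + \delta_j x\bigr)$ enter your cleared inequality with both signs and with $x$-dependent polynomial weights, and you offer no estimate controlling their net sign for all $x > 0$.

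The paper closes this gap with two moves absent from your plan. First, rather than working with your identity in decaying exponentials, it multiplies the difference by the positive factor $\bigl(x g_{k+2}(x) + k+2\bigr)\exp(\delta_k x)/\sqrt{c_k^*}$, so that exactly one growing exponential $\sqrt{c_{k+1}^*}\exp(\delta_k x)$ survives, and it uses the \emph{exact} identity $\delta_k - \delta_{k+2} = \delta_{k+1}/(k+2)$, immediate from \eqref{ck recursively 2}. Here your proposed concavity detour via Lemma~\ref{bounding ck} is both unproven and insufficient: mere positivity of $\delta_k - \delta_{k+2}$ is not enough, because the exact value is what makes the linear terms cancel in the next step. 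Second, after dividing by $x$ and discarding the manifestly positive remainder $(k+2)\bigl(\exp(\delta_{k+1}x/(k+2)) - 1 - \delta_{k+1}x/(k+2)\bigr)/x$, the problem reduces to showing $J_k(x) := \sqrt{c_{k+1}^*}\bigl(\exp(\delta_k x) - 1\bigr) - \sqrt{c_{k+2}^*}\bigl(\exp(-\delta_{k+2} x) - 1\bigr) - x > 0$, and this is where the first-order cancellation you worried about is actually beaten: $J_k(0) = 0$, while $J_k'(0) = c_{k+1}^* - c_{k+2}^* + 1 > 1/\bigl(16(k+2)(k+5/2)\bigr) > 0$ by the two-sided bounds of Lemma~\ref{bounding ck}, and $J_k''(x) > (\delta_k - \delta_{k+2})/2 > 0$ because $c_{k+1}^* > k + 3/2$ and $c_{k+2}^* > k + 5/2$. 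Until you supply an argument of this kind, the central chain $\what{\Delta}_0 > \what{\Delta}_2 > \cdots$ and $\what{\Delta}_1 < \what{\Delta}_3 < \cdots$ remains unproved, whereas everything you did establish was already contained in the discussion preceding the theorem.
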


Figure~\ref{fig:B5} shows the approximation errors $\what{\Delta}_k$ for $k=0,2,\ldots,9$. Table~\ref{tab:Delta-hat} contains some values of the maximum of $|\Delta_k|$ and $|\what{\Delta}_k|$, rounded up to four significant digits. Although the bounds $\phi/\what{h}_k$ are much better than $\phi/h_k$ or $\phi/\wtilde{h}_k$ in terms of maximal error, note that $|\Delta_k(x)| < |\what{\Delta}_k(x)|$ for sufficiently large $x > 0$. Precisely, $|\Delta_k(x)| < |\what{\Delta}_k(x)|$ if, and only if,
$$
	\exp(\delta_k x)
	\ > \ \frac{\sqrt{c_k^* + (x/2)^2} + x/2}{\sqrt{c_k^*}} ,
$$
and numerical experiments show that this is true for $x \ge 3.2$ if $k = 0$ and $x \ge 3$ if $k \ge 1$.

\begin{figure}[h]
\includegraphics[width=0.49\textwidth]{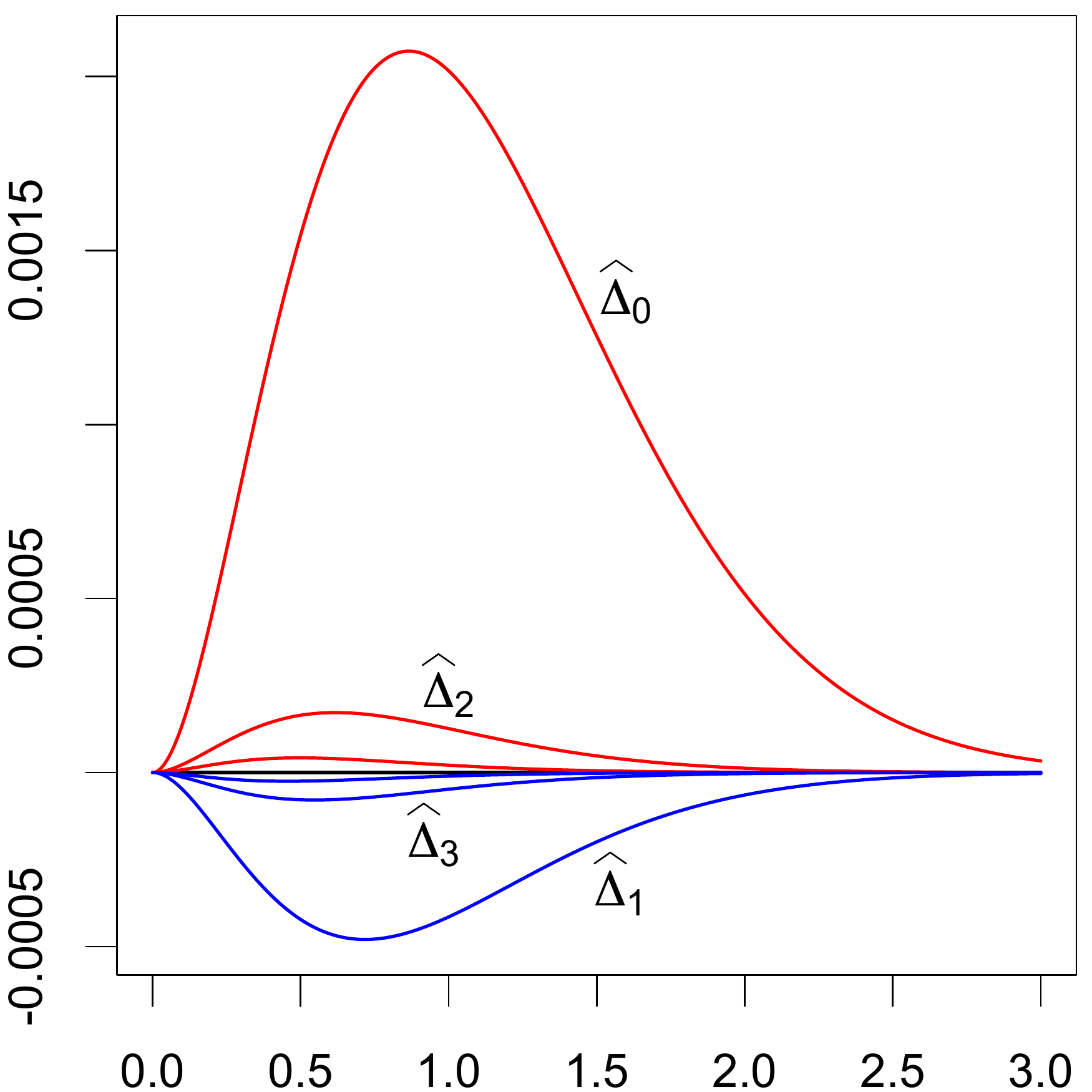}
\hfill
\includegraphics[width=0.49\textwidth]{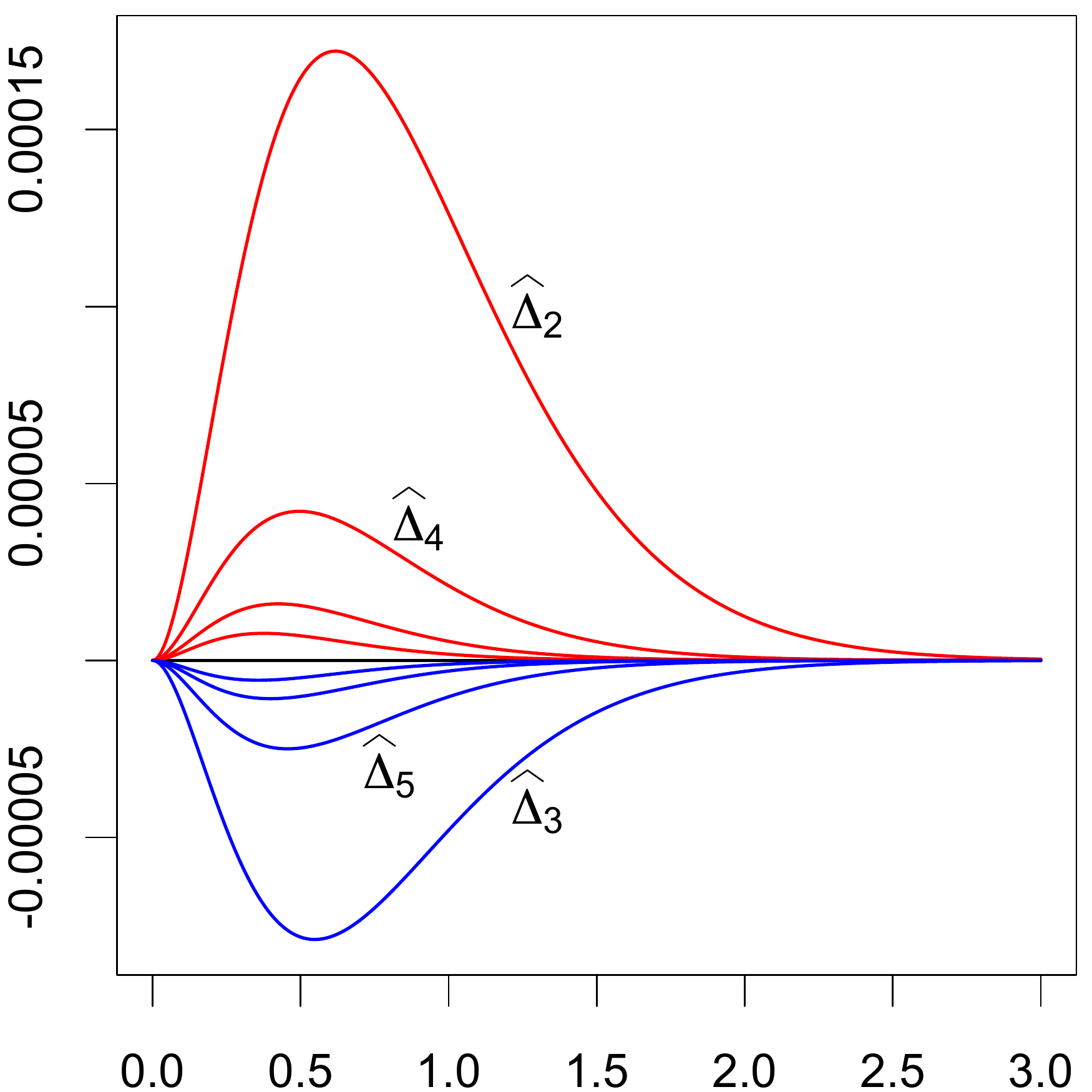}
\caption{Approximation errors $\what{\Delta}_k$ for $k=0,1,\ldots,5$ (left) and $k = 2, 3, \ldots, 9$ (right).}
\label{fig:B5}
\end{figure}

\begin{table}[h]
$$
	\begin{array}{|c||c|c|c|c|c|}
	\hline
	k &
	\displaystyle\max_{x > 0} \, \bigl| \what{\Delta}_k(x) \bigr|^{\strut} &
	\displaystyle\max_{x > 0} \, \bigl| \Delta_k(x) \bigr|^{\strut} &
	\displaystyle\max_{x \ge 1} \, \bigl| \Delta_k(x) \bigr|^{\strut} &
	\displaystyle\max_{x \ge 2} \, \bigl| \Delta_k(x) \bigr|^{\strut} &
	\displaystyle\max_{x \ge 3} \, \bigl| \Delta_k(x) \bigr|^{\strut} \\
	\hline\hline
	0^{\strut}
	  & 2.074 \cdot 10^{-3}
	  & 1.571 \cdot 10^{-2}
	  & 9.194 \cdot 10^{-3}
	  & 9.374 \cdot 10^{-4}
	  & 3.550 \cdot 10^{-5} \\
	\hline
	1^{\strut}
	  & 4.796 \cdot 10^{-4}
	  & 3.820 \cdot 10^{-3}
	  & 1.606 \cdot 10^{-3}
	  & 1.041 \cdot 10^{-4}
	  & 2.612 \cdot 10^{-6} \\
	\hline
	2^{\strut}
	  & 1.723 \cdot 10^{-4}
	  & 1.622 \cdot 10^{-3}
	  & 4.687 \cdot 10^{-4}
	  & 1.896 \cdot 10^{-5}
	  & 3.175 \cdot 10^{-7} \\
	\hline
	3^{\strut}
	  & 7.888 \cdot 10^{-5}
	  & 8.735 \cdot 10^{-4}
	  & 1.764 \cdot 10^{-4}
	  & 4.591 \cdot 10^{-6}
	  & 5.226 \cdot 10^{-8} \\
	\hline
	4^{\strut}
	  & 4.214 \cdot 10^{-5}
	  & 5.433 \cdot 10^{-4}
	  & 7.775 \cdot 10^{-5}
	  & 1.342 \cdot 10^{-6}
	  & 1.059 \cdot 10^{-8} \\
	\hline
	5^{\strut}
	  & 2.499 \cdot 10^{-5}
	  & 3.685 \cdot 10^{-4}
	  & 3.814 \cdot 10^{-5}
	  & 4.480 \cdot 10^{-7}
	  & 2.497 \cdot 10^{-9} \\
	\hline
	6^{\strut}
	  & 1.599 \cdot 10^{-5}
	  & 2.663 \cdot 10^{-4}
	  & 2.023 \cdot 10^{-5}
	  & 1.655 \cdot 10^{-7}
	  & 6.625 \cdot 10^{-10} \\
	\hline
	7^{\strut}
	  & 1.082 \cdot 10^{-5}
	  & 2.010 \cdot 10^{-4}
	  & 1.138 \cdot 10^{-5}
	  & 6.616 \cdot 10^{-8}
	  & 1.932 \cdot 10^{-10} \\
	\hline
	\end{array}
$$
\caption{Maximal approximation errors.}
\label{tab:Delta-hat}
\end{table}

\begin{proof}[\bf Proof of Theorem~\ref{dalastone}]
We only have to show that $\what{\Delta}_k > \what{\Delta}_{k+2}$ for even $k$ and $\what{\Delta}_k < \what{\Delta}_{k+2}$ for odd $k$ on $(0,\infty)$. This means that for any integer $k \ge 0$ and $x > 0$, the difference
\begin{eqnarray}
\nonumber
	x + \frac{k+1}{x + \displaystyle
			\frac{k+2}{g_{k+2}(x)}}
		- g_k(x)
	& = & \frac{(k+1) g_{k+2}(x)}{x g_{k+2}(x) + k+2}
		- (g_k(x) - x) \\
\nonumber
	& = & \frac{(k+1) g_{k+2}(x)}{x g_{k+2}(x) + k+2}
		- \sqrt{c_k^*} \exp(- \delta_k x) \\
\nonumber
	& \sim & \sqrt{c_{k+1}^*} \exp(\delta_k x) g_{k+2}(x)
		- x g_{k+2}(x) - (k+2) \\
\label{tralala}
	& = & \sqrt{c_{k+1}^*} \exp( \delta_k x) x
		- \sqrt{c_{k+2}^*} \exp(- \delta_{k+2} x) x - x^2 \\
\nonumber
	&& + \ (k+2) \Bigl( \exp \bigl( (\delta_k - \delta_{k+2}) x \bigr) - 1 \Bigr)
\end{eqnarray}
is strictly positive. Note that we utilized \eqref{ck recursively 1} twice. Note also that
\begin{eqnarray*}
	\delta_k - \delta_{k+2}
	& = & \sqrt{c_{k+1}^*} - \sqrt{c_k^*} - \sqrt{c_{k+3}^*} + \sqrt{c_{k+2}^*} \\
	& = & \sqrt{c_{k+1}^*} - \frac{k+1}{k+2} \sqrt{c_{k+2}^*}
		- \frac{k+3}{k+2} \sqrt{c_{k+1}^*} + \sqrt{c_{k+2}^*} \\
	& = & \frac{\sqrt{c_{k+2}^*} - \sqrt{c_{k+1}^*}}{k+2}
		\ = \ \frac{\delta_{k+1}}{k+2}
\end{eqnarray*}
by \eqref{ck recursively 2}. Hence dividing \eqref{tralala} by $x$ yields
\begin{eqnarray*}
	\lefteqn{ \sqrt{c_{k+1}^*} \exp( \delta_k x)
		- \sqrt{c_{k+2}^*} \exp(- \delta_{k+2} x)
		- x } \\
	&& + \ (k+2) \Bigl( \exp \Bigl( \frac{\delta_{k+1} x}{k+2} \Bigr) - 1 \Bigr) \big/ x \\
	& = & \sqrt{c_{k+1}^*} \bigl( \exp( \delta_k x) - 1 \bigr)
		- \sqrt{c_{k+2}^*} \bigl( \exp(- \delta_{k+2} x) - 1 \bigr) - x \\
	&& + \ (k+2) \Bigl( \exp \Bigl( \frac{\delta_{k+1} x}{k+2} \Bigr)
		- 1 - \frac{\delta_{k+1} x}{k+2} \Bigr) \big/ x \\
	& > & \sqrt{c_{k+1}^*} \bigl( \exp( \delta_k x) - 1 \bigr)
		- \sqrt{c_{k+2}^*} \bigl( \exp(- \delta_{k+2} x) - 1 \bigr) - x
		\ =: \ J_k(x) .
\end{eqnarray*}
To verify that $J_k(x) > 0$ for all $x > 0$, note that $J_k(0) = 0$ and
\begin{eqnarray*}
	J_k'(0)
	& = & \sqrt{c_{k+1}^*} \, \delta_k
		+ \sqrt{c_{k+2}^*} \, \delta_{k+2} - 1 \\
	& = & (c_{k+1}^* - k - 1) + (k + 3 - c_{k+2}^*) - 1 \\
	& = & c_{k+1}^* - c_{k+2}^* + 1 \\
	& > & \frac{1}{8(k+2)} - \frac{1}{8(k+5/2)} \\
	& = & \frac{1}{16(k+2)(k+5/2)} \ > \ 0
\end{eqnarray*}
by Lemma~\ref{bounding ck}. Finally,
\begin{eqnarray*}
	J_k''(x)
	& = & \sqrt{c_{k+1}^*} \, \delta_k^2 \exp(\delta_k x)
		- \sqrt{c_{k+2}^*} \, \delta_{k+2}^2 \exp(- \delta_{k+2} x) \\
	& > & \sqrt{c_{k+1}^*} \, \delta_k^2 - \sqrt{c_{k+2}^*} \, \delta_{k+2}^2 \\
	& = & (c_{k+1}^* - k - 1) \delta_k - (k+3 - c_{k+2}^*) \delta_{k+2} \\
	& > & \delta_k/2 - \delta_{k+2}/2 \ > \ 0 ,
\end{eqnarray*}
because $c_{k+1}^* > k+3/2$ and $c_{k+2}^* > k + 5/2$.
\end{proof}

\section{Alternative Representations}
\label{Polynomials}

In this section we describe briefly representations of our approximations in terms of simple fractions rather than continued fractions. This material is rather standard, and we refer to Kouba (2006) for deeper connections with Hermite polynomials.

\begin{Lemma}
\label{lem:simple fractions}
Let $h_0, h_1, h_2, \ldots$ be constructed as in Lemma~\ref{lem:continued fractions}. Define polynomials via $P_0(x) := 1$, $P_1(x) := x$, $Q_0(x) := 0$, $Q_1(x) := 1$ and inductively, for $k = 2, 3, 4, \ldots$,
\begin{eqnarray*}
	P_k(x) & := & (k-1) P_{k-2}(x) + x P_{k-1}(x) , \\
	Q_k(x) & := & (k-1) Q_{k-2}(x) + x Q_{k-1}(x) .
\end{eqnarray*}
Then for arbitrary $k \ge 1$ and $x > 0$,
$$
	h_k(x) \ = \ \frac{k P_{k-1}(x) + P_k(x) g_k(x)}{k Q_{k-1}(x) + Q_k(x) g_k(x)} .
$$
In particular, if $g_k(x) = x$, then $h_k(x) = P_{k+1}(x)/Q_{k+1}(x)$.
\end{Lemma}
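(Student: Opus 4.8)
The plan is to prove the continued-fraction representation by induction on $k$, exploiting the recursive structure of both the $h_k$ and the polynomials $P_k, Q_k$. The key observation is that the formula
$$
	h_k(x) \ = \ \frac{k P_{k-1}(x) + P_k(x) g_k(x)}{k Q_{k-1}(x) + Q_k(x) g_k(x)}
$$
should really be read as a statement about an arbitrary ``tail function'' sitting in the bottom of the continued fraction. So first I would reformulate the claim in a way amenable to induction: for each $k \ge 1$ and \emph{any} positive quantity $s$ substituted in the innermost slot, the finite continued fraction
$$
	x + \frac{1}{x + \frac{2}{\ddots \, x + \frac{k}{s}}}
$$
equals $\bigl( k P_{k-1}(x) + P_k(x) s \bigr) / \bigl( k Q_{k-1}(x) + Q_k(x) s \bigr)$. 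Taking $s = g_k(x)$ recovers the stated formula.

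First I would verify the base case $k = 1$ directly: the continued fraction is just $x + 1/s$, and with $P_0 = 1$, $P_1 = x$, $Q_0 = 0$, $Q_1 = 1$ the right-hand side reads $(P_0 + P_1 s)/(Q_0 + Q_1 s) = (1 + xs)/s = x + 1/s$, as required. For the inductive step, I would observe that the depth-$(k+1)$ continued fraction with innermost entry $s$ is obtained from the depth-$k$ one by replacing its innermost entry $s_{\mathrm{old}}$, namely $g_k$-slot value, with $x + (k+1)/s$. Concretely, the $(k+1)$-fold fraction equals the $k$-fold fraction evaluated at the substituted argument $x + (k+1)/s$ in place of $s$. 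Applying the induction hypothesis with this new argument gives
$$
	h_{k+1}(x) \ = \ \frac{k P_{k-1}(x) + P_k(x)\bigl( x + (k+1)/s \bigr)}
		{k Q_{k-1}(x) + Q_k(x)\bigl( x + (k+1)/s \bigr)} .
$$

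The main work is then purely algebraic: clearing the $1/s$ term by multiplying numerator and denominator by $s$, and collecting the constant and $s$-linear parts. The numerator becomes
$$
	\bigl( k P_{k-1}(x) + x P_k(x) \bigr) s + (k+1) P_k(x),
$$
and I would recognize the coefficient of $s$ as exactly $P_{k+1}(x)$ by the defining recurrence $P_{k+1} = k P_{k-1} + x P_k$, while the constant term is $(k+1) P_k(x)$. The denominator transforms identically with $Q$ in place of $P$. This yields precisely $\bigl( (k+1) P_k(x) + P_{k+1}(x) s \bigr)/\bigl( (k+1) Q_k(x) + Q_{k+1}(x) s \bigr)$, completing the induction. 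The only mild obstacle is bookkeeping the index shift in the recurrence (the coefficient multiplying $P_{k-2}$ in the definition of $P_k$ is $k-1$, so the coefficient appearing when passing from level $k$ to $k+1$ is $k$), but matching this against the ``$k$'' and ``$k+1$'' appearing in the continued fraction is exactly what makes the two recursions compatible. The final sentence, that $g_k(x) = x$ gives $h_k = P_{k+1}/Q_{k+1}$, follows by substituting $s = x$ into the general formula and invoking the recurrences once more:
$$
	k P_{k-1}(x) + P_k(x)\, x \ = \ P_{k+1}(x),
$$
and likewise for $Q$.
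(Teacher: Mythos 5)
Your proof is correct and takes essentially the same route as the paper: both argue by induction on the depth of the continued fraction, using the innermost substitution (replacing the tail $s$ by $x + (k+1)/s$, which is the paper's relation $g_{k-1}(x) = x + k/g_k(x)$) and then collecting the constant and $s$-linear parts to recover the three-term recurrences. The paper merely packages the identical algebra with auxiliary polynomials $A_k = k P_{k-1}$ and $B_k = k Q_{k-1}$ that it subsequently eliminates, so the difference is purely presentational.
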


Table~\ref{tab:polynomials} contains a list of the polynomials $P_k, Q_k$ for $0 \le k \le 8$.
Suppose that $g_k(x) = x + G_k(x)$ for some $G_k : [0,\infty) \to (0,\infty)$, for instance,
\begin{eqnarray*}
	G_k(x) & := & \sqrt{c_k^* + (x/2)^2} - x/2 \ = \ \frac{c_k^*}{\sqrt{c_k^* + (x/2)^2} + x/2}
		\quad\text{in Section~\ref{Square Roots}} , \\
	G_k(x) & := & \sqrt{c_k^*} \exp \Bigl( - \bigl( \sqrt{c_{k+1}^*} - \sqrt{c_k^*} \bigr) x \Bigr)
		\quad\text{in Section~\ref{Exponential}} .
\end{eqnarray*}
Then
$$
	h_k \ = \ \frac{P_{k+1} + P_k G_k}{Q_{k+1} + Q_k G_k}
	\quad\text{on} \ [0,\infty) .
$$
For instance,
\begin{eqnarray*}
	h_2(x) & = & \frac{x^3 + 3x + (x^2 + 1) G_2(x)}
		{x^2 + 2 + x G_2(x)} , \\
	h_3(x) & = & \frac{x^4 + 6x^2 + 3 + (x^3 + 3x) G_3(x)}
		{x^3 + 5x + (x^2 + 2) G_3(x)} , \\
	h_4(x) & = & \frac{x^5 + 10x^3 + 15x + (x^4 + 6x^2 + 3) G_4(x)}
		{x^4 + 9x^2 + 8 + (x^3 + 5x) G_4(x)} , \\
	h_5(x) & = & \frac{x^6 + 15x^4 + 45x^2 + 15 + (x^5 + 10x^3 + 15x) G_5(x)}
		{x^5 + 14x^3 + 33x + (x^4 + 9x^2 + 8) G_5(x)} .
\end{eqnarray*}

\begin{table}[h]
$$
	\begin{array}{|c||ccccccccc|cccccccc|}
	\hline
	 k & \multicolumn{9}{|c|}{P_k(x) = \sum_{j}a_j x^j} & \multicolumn{8}{c|}{Q_k(x) = \sum_j b_j x^j} \\
	   & a_0 & a_1 & a_2 & a_3 & a_4 & a_5 & a_6 & a_7 & a_8 
	   & b_0 & b_1 & b_2 & b_3 & b_4 & b_5 & b_6 & b_7 \\
	\hline\hline
	 0 &  1  &     &     &     &     &     &     &     &     
	   &  0  &     &     &     &     &     &     &     \\
	\hline
	 1 &  0  &  1  &     &     &     &     &     &     &     
	   &  1  &     &     &     &     &     &     &     \\
	\hline
	 2 &  1  &  0  &  1  &     &     &     &     &     &     
	   &  0  &  1  &     &     &     &     &     &     \\
	\hline
	 3 &  0  &  3  &  0  &  1  &     &     &     &     &     
	   &  2  &  0  &  1  &     &     &     &     &     \\
	\hline
	 4 &  3  &  0  &  6  &  0  &  1  &     &     &     &     
	   &  0  &  5  &  0  &  1  &     &     &     &     \\
	\hline
	 5 &  0  & 15  &  0  & 10  &  0  &  1  &     &     &     
	   &  8  &  0  &  9  &  0  &  1  &     &     &     \\
	\hline
	 6 & 15  &  0  & 45  &  0  & 15  &  0  &  1  &     &     
	   &  0  & 33  &  0  & 14  &  0  &  1  &     &     \\
	\hline
	 7 &  0  & 105 &  0  & 105 &  0  & 21  &  0  &  1  &     
	   & 48  &  0  & 87  &  0  & 20  &  0  &  1  &     \\
	\hline
	 8 & 105 &  0  & 420 &  0  & 210 &  0  & 28  &  0  &  1  
	   &  0  & 279 &  0  & 185 &  0  & 27  &  0  &  1  \\
	\hline
	\end{array}
$$
\caption{Auxiliary polynomials $P_k, Q_k$ for $0 \le k \le 8$.}
\label{tab:polynomials}
\end{table}

\begin{proof}[\bf Proof of Lemma~\ref{lem:simple fractions}]
Note first that $h_k(x)$ may be written as
$$
	h_k(x) \ = \ \frac{A_k(x) + P_k(x) g_k(x)}{B_k(x) + Q_k(x)g_k(x)}
$$
with certain polynomials $A_k, P_k, B_k, Q_k$. Indeed,
\begin{eqnarray*}
	h_0(x) & = & \frac{0 + 1\cdot g_0(x)}{1 + 0\cdot g_0(x)} ,	\quad \text{so}
		\ [A_0(x),P_0(x),B_0(x),Q_0(x)] = [0,1,1,0] , \\
	h_1(x) & = & \frac{1 + x g_1(x)}{0 + 1\cdot g_1(x)} ,	\quad \text{so}
		\ [A_1(x),P_1(x),B_1(x),Q_1(x)] = [1,x,0,1] .
\end{eqnarray*}
For $k \ge 2$, let $g_{k-1}(x) := x + k/g_k(x) = (k + x g_k(x))/g_k(x)$. Then
\begin{eqnarray*}
	h_k(x)
	= h_{k-1}(x)
	& = & \frac{A_{k-1}(x) g_k(x) + P_{k-1}(x) (k + x g_k(x))}
	           {B_{k-1}(x) g_k(x) + Q_{k-1}(x) (k + x g_k(x))} \\
	& = & \frac{k P_{k-1}(x) + (A_{k-1}(x) + x P_{k-1}(x)) g_k(x)}
	           {k Q_{k-1}(x) + (B_{k-1}(x) + x Q_{k-1}(x)) g_{k}(x)} ,
\end{eqnarray*}
i.e.
\begin{eqnarray*}
	A_k(x) \ = \ k P_{k-1}(x) & \text{and} & P_k(x) \ = \ A_{k-1}(x) + x P_{k-1}(x) , \\
	B_k(x) \ = \ k Q_{k-1}(x) & \text{and} & Q_k(x) \ = \ B_{k-1}(x) + x Q_{k-1}(x) .
\end{eqnarray*}
Since $A_1(x) = 1\cdot P_0(x)$ and $B_1(x) = 1 \cdot Q_0(x)$, we may write
$$
	h_k(x) \ = \ \frac{k P_{k-1}(x) + P_k(x) g_k(x)}{k Q_{k-1}(x) + Q_k(x)g_k(x)}
	\quad\text{for} \ k \ge 1 ,
$$
where
\begin{eqnarray*}
	P_k(x) & = & (k-1) P_{k-2}(x) + x P_{k-1}(x)	\quad\text{and} \\
	Q_k(x) & = & (k-1) Q_{k-2}(x) + x Q_{k-1}(x)	\quad\text{for} \ k=2,3,4, \ldots .
\end{eqnarray*}\\[-8ex]
\end{proof}

\paragraph{Acknoledgement.}
I am grateful to Jon A.\ Wellner for stimulating discussions and numerous references.

\end{document}